\newtheorem{theorem}{Theorem}
\newtheorem{lemma}[theorem]{Lemma}
\newtheorem{definition}[theorem]{Definition}
\newtheorem{example}[theorem]{Example}
\newtheorem{remark}[theorem]{Remark}
\newtheorem{problem}[theorem]{Problem}
\newtheorem{acknowledgment}[]{Acknowledgment}
\newcommand{\RR}{\mathbb{R}}
\newcommand{\NN}{\mathbb{N}}
\newcommand{\ZZ}{\mathbb{Z}}   
\newcommand{\sS}{\mathcal{S}}
\newcommand{\set}[2]{\left\{#1 \;\middle\vert\; #2\right\}}
\newcommand{\witi}[1]{\widetilde{#1}}
\DeclareMathOperator{\D}{d}
\DeclareMathOperator{\rnk}{rank}
\DeclareMathOperator{\Hess}{Hess}
\DeclareMathOperator{\SA}{SA}
\DeclareMathOperator{\dest}{dest}
\newenvironment{glists}[4]{
                 \begin{list}{}{
                     \setlength{\labelwidth}{#2}
                     \setlength{\labelsep}{#3}
                     \setlength{\leftmargin}{#1}
                     \addtolength{\leftmargin}{\labelwidth}
                     \addtolength{\leftmargin}{\labelsep}
                     \setlength{\parsep}{#4}
                     \setlength{\topsep}{\parsep}
                     \setlength{\itemsep}{\parsep}
                     \setlength{\listparindent}{0in}
                     }
                 }{
                 \end{list}
                 }
\newcommand{\iteml}[1]{\item[#1 \hfill]}
\newcommand{\itemr}[1]{\item[\hfill #1]}
\definecolor{monvert}{rgb}{0.05,0.54,0.05}
\def\new#1{{\textcolor{black}{#1}}}
\begin{document}



\SetKwInOut{Input}{Input}
\SetKwInOut{Output}{Output}

\SetKwFor{Loop}{loop}{}{}

\SetKw{LOOP}{loop}
\SetKw{ELSE}{else}

\newcommand{\algrule}[1][.2pt]{\par\vskip.5\baselineskip\hrule height #1\par\vskip.5\baselineskip}


\begin{frontmatter}
\title{Connectivity in Semi-Algebraic Sets I}

\author{Hoon Hong}
\address{North Carolina State University}
\address{Raleigh, NC 27695, USA}
\ead{hong@ncsu.edu}

\author{James Rohal}
\address{U.S. Naval Research Laboratory}
\address{SW Washington, DC 20375, USA}
\ead{jjrohal@gmail.com}

\author{Mohab {Safey~El~Din}}
\address{Sorbonne Universit\'e, CNRS, LIP6}
\address{75005 Paris, France}
\ead{Mohab.Safey@lip6.fr}

\author{\'Eric Schost}
\address{University of Waterloo}
\address{Waterloo, ON   N2L 3G1, Canada}
\ead{eschost@uwaterloo.ca}

\begin{abstract}
   A semi-algebraic set is a subset of the real space defined by polynomial equations and inequalities having real coefficients and is a union of finitely many maximally connected components. We consider the problem of deciding whether two given points in a semi-algebraic set are connected; that is, whether the two points lie in the same connected component. In particular, we consider the semi-algebraic set defined by $f \neq 0$ where $f$ is a given polynomial with integer coefficients. The motivation comes from the observation that many important or non-trivial problems in science and engineering can be often reduced to that of connectivity. Due to its importance, there has been intense research effort on the problem. We will describe a symbolic-numeric method based on gradient ascent. The method will be described in two papers. The first  paper (the present one) will describe  the symbolic part and the forthcoming second paper will describe the numeric part. In the present paper, we give proofs of correctness and termination for the symbolic part and illustrate the efficacy of the method using several non-trivial examples.
\end{abstract}

\begin{keyword}
connectivity \sep roadmap \sep semi-algebraic sets \sep gradient ascent, Morse complex, 
Sard's theorem
\medskip

{\bf MSC2020}: 14Q30, 68W30, 14P10, 14P25, 37D15

\end{keyword}

\end{frontmatter}


\section{Introduction}
Many important or non-trivial problems in science and engineering can be reduced to that of ``connectivity;'' that is, deciding whether two given points in a given set can be connected via a continuous path within the set. Equivalently, it is a problem of deciding whether the two points lie in a same connected component of a given semi-algebraic set. 

In a series of papers \cite{
SchwartzSharir1983,
Schwartz1983,
SchwartzSharir1983a,
SharirAriel-Sheffi1984,
SchwartzSharir1984,
Schwartz1987},
Schwartz and collaborators developed the first rigorous methods 
based on  Collins' Cylindrical Algebraic Decomposition~\cite{Collins:75} 
and adjacency determination~\cite{
Arnon_Collins_McCallum:84b,
Arnon_Collins_McCallum:88,
Arnon:88}, 
which is based on repeated univariate resultants, 
with  doubly exponential complexity in the number of variables.
In \cite{Canny1988, Canny:93b}, 
Canny presented a method that explicitly builds a roadmap 
by using multivariate (Macaulay) resultant, 
with a single exponential complexity  
$s^{n}\log( s) d^{O\left(  n^{4}\right)}$ (deterministic) and 
$s^{n}\log^2 (s) d^{O\left(  n^{2}\right)}$ (randomized) 
where $n$ is \#\ of variables, 
$s$ is \# polynomials and 
$d$ is the maximum degree. 
It inspired intensive effort to improve the exponent: to name a few,
Gournay, Risler, Heintz, Roy, Solerno, Basu, Pollack, Roy, Grigoriev, 
Vorobjov, Safey El Din and Schost
\cite{
Heintz_Roy_Solerno:90b, 
Heintz_Roy_Solerno:90c,
GHRSV90, 
CGV92, 
GV92,
Gournay_Risler:93,
Heintz_Roy_Solerno:94a,
BaPoRo96,
BPRRoadmap,
SafeyElDin2010,
Basu2014b,
Basu2014,
SafeyEldin_Schost:2017}. 
The current state of the art is as follows: 
1. Deterministic\;~\cite{Basu2014}\;:   
polynomial in $n^{n\log^{3}(n)}d^{n\log^{2}(n)}$
(for arbitrary real algebraic set).
2. Probabilistic\;~\cite{SafeyEldin_Schost:2017}\;:    
polynomial in $\left( nd\right)^{n\log (k)}$ and sub quadratic in the
output size 
(for smooth-compact real algebraic set) which is near-optimal, where $k$ is the dimension of the real algebraic set.

Summarizing, through the intensive efforts during last several decades,
tremendous progress has been made, resulting in  asymptotically fast
(near-optimal) algorithms. Some attempts to make roadmap algorithms practical 
have been successful on some real-life problems coming from robotics (see e.g.
\cite{CSS20} for the analysis of kinematic singularities of some industrial
robots) but note that the behaviour of such algorithms depends on the
\emph{degree} of the (Zariski closure of the) roadmap itself and of some
algebraic sets needed to be  considered to compute them. Such degrees can
sometimes become an obstruction to tackle applications.

In this paper, we present an alternative approach with the hope of reducing the constant.
We will consider a crucial special case where the given set is a particular type of semi-algebraic set, in that it consists of the points where a given polynomial~$f$ is not equal to~$0$. We state the problem more precisely. 
\begin{problem}\label{problem}~\\
\begin{glists}{1em}{5em}{0em}{0em}
  \iteml{\textbf{Input}:}
  $f \in \mathbb{Z}[x_1,\ldots,x_n]$, $n \geq 2$, $\deg f \geq 1$, squarefree, with finitely many singular points, \\
  $p,q \in \mathbb{Q}^n \cap \{f \neq 0\}$ where
  \[
    \{f \neq 0\} := \set{x\in \mathbb{R}^n}{f(x) \neq 0}.
  \]
  \iteml{\textbf{Output}:}
  {\tt true}, if the two points $p$ and $q$
  lie in a same semi-algebraically connected component of 
  the set $\{f \neq 0\}$, else {\tt false}.
\end{glists}
\end{problem}
\medskip 
 
\begin{example}\label{ex:toy} We illustrate the problem using a toy example. Let
\begin{equation}\label{eq:f}
f = -2 x_1^2 + x_1^4 - 2 x_2^2 + 2 x_1^2 x_2^2 + x_2^4.
\end{equation}
Figure~\eqref{fig:fplot} below shows the curve defined by $f=0$. 
Note that its complement, $\{f \neq 0\}$, consists of two semi-algebraically connected components. 
\begin{figure}[ht]
\centering
\subfloat[]{\includegraphics{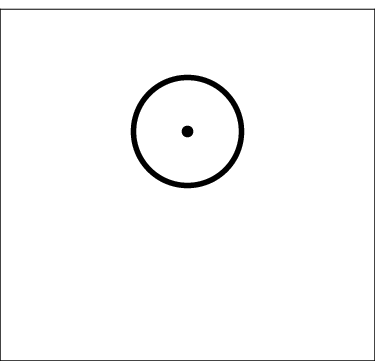}\label{fig:fplot}}\qquad
\subfloat[]{\includegraphics{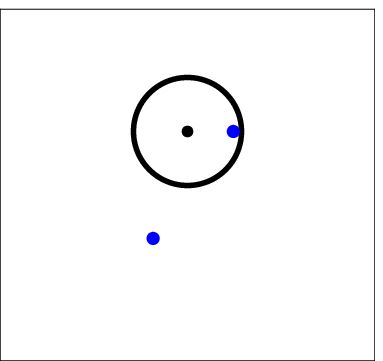}\label{fig:fplotfalse}}\qquad
\subfloat[]{\includegraphics{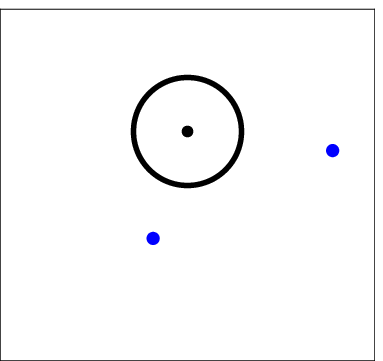}\label{fig:fplottrue}}
\caption{}
\end{figure}
The two points $p, q$ (in blue) in Figure~\eqref{fig:fplotfalse} can not be
connected via a continuous path in $\{f \neq 0\}$, since they belong to
different connected components. Hence the output should be {\tt false}. The
two points $p, q$ (in blue) in Figure~\eqref{fig:fplottrue}, however, can be connected via a continuous path in $\{f \neq 0\}$, since they belong to a same connected component. Hence the output should be {\tt true}. 
\end{example}

We present a symbolic-numeric algorithm which solves Problem~\eqref{problem}.   The algorithm will be described in two papers.  In this paper we focus on the symbolic part of the algorithm and assume the availability of  the numeric part which will be described in the forthcoming second  paper. We chose to divide the description into two papers because the methods and techniques are quite different from each other and interesting on their own. 

The paper is organized as follows. In Section~\ref{sec:algorithm} we will describe an algorithm for tackling the connectivity problem. It first searches for a certain ``nice'' function  $g$ based on $f$ and then tries to connect the input points via several steepest ascent paths of $ g$.
 In Section~\ref{sec:correctness} we prove the correctness of the algorithm assuming that a ``nice''  function $g$ is found,  by adapting Morse theory. In Section~\ref{sec:termination} we prove the termination of the algorithm by showing  that a ``nice'' function $g$ can be found in finite time, by using  Sard's Theorem and the Constant Rank Theorem.  In Section~\ref{examples} we give several non-trivial examples illustrating the use of the method. In Section~\ref{conclusion} we summarize the results in the paper and briefly mention future works.

 
\section{Algorithm}\label{sec:algorithm}

In this section, we describe a symbolic-numeric algorithm called
\textbf{Connectivity}. The algorithm \textbf{Connectivity} has the
same input and output specification we introduced in
Problem~\ref{problem}. This section is divided into two subsections. The
first subsection describes only the input/output specification of a
certified numeric subalgorithm called \textbf{Destination}, whose
steps will be described in the forthcoming second paper. The second
subsection describes the steps of the algorithm \textbf{Connectivity},
which relies on the use of \textbf{Destination}.  \new{Given a $C^1$
  map $\phi: \mathbb{R}\to\mathbb{R}^n$, we denote by $\phi'$ its
  derivative.}
\subsection{Specification of the Numeric Subalgorithm \textbf{Destination} }

In this subsection, we will describe the input/output specification of a certified numeric
subalgorithm called \textbf{Destination}, whose steps will be described in the forthcoming
second paper. We begin by introducing some definitions. Throughout this paper we let $\lVert \cdot \rVert$ denote the Euclidean norm and for a non-zero vector $v$, we let $\widehat{v} = \frac{v}{\lVert v \rVert}$.

\begin{definition}\label{def:trajectory} Let $g \colon \RR^n \to \RR$ be a $C^2$ function. Let $p$ be a point in $\RR^n$ and $v$ be a unit vector in $\RR^n$.  We say $\phi$ is a \textit{trajectory of $\nabla g$ through $p$ using $v$} if $\phi \colon \RR_+ \to \RR^n$ is a $C^2$ function and
\begin{equation}\label{eq:phi}
\forall t > 0 \left(\phi'(t) = \nabla g\bigl(\phi(t)\bigr) \text{ and } \phi'(t) \neq 0 \right)
\end{equation}
and 
\[
\lim_{t \to 0^+} \phi(t)  = p
\]
and 
\[
\lim_{t \to 0^+} \frac{\phi'(t)}{\lVert \phi'(t) \rVert} = \lim_{t \to 0^+} \frac{\nabla g\bigl(\phi(t)\bigr)}{\left\lVert \nabla g\bigl(\phi(t)\bigr) \right\rVert} = v.
\]

We call the image $\phi\bigl((0,\infty)\bigr)$ a \textit{steepest ascent path through $p$ using $v$} and denote this as $\SA(g, p, v)$. We call $\dest(\phi)$ a \textit{destination of $\phi$} if the following limit exists:
\[
\dest(\phi) = \lim_{t\to \infty} \phi(t).
\]
We say a point \textit{$q \in \RR^n$ is reachable from $p$ using $g$ and $v$} if there exists $\phi$, a trajectory of $\nabla g$ through $p$ using $v$, such that $\dest(\phi) = q$.
\end{definition}

\begin{example}\label{ex:traj} Let 
\begin{equation}\label{eq:g}
g = \frac{\left(-2 x_1^2 + x_1^4 - 2 x_2^2 + 2 x_1^2 x_2^2 + x_2^4\right)^2}{\bigl(x_1^2 + (x_2-1)^2 + 1\bigr)^5},
\end{equation}
$v = \langle -1, 0\rangle$, and $r_2$ be a critical point of $g$ shown in
Figure~\eqref{fig:reachable}. There exists a trajectory $\phi$ of $\nabla
g$ through $r_2$ using $v$ satisfying the properties above. In Figure~\eqref{fig:reachable} we illustrate $\SA(g, r_2, v)$ as the red curve, where $v$ is shown as the green arrow. We see the point $r_4$ is reachable from $r_2$ using $g$ and $v$ because $\dest(\phi) = r_4$.
\begin{figure}[ht]
\centering
\includegraphics{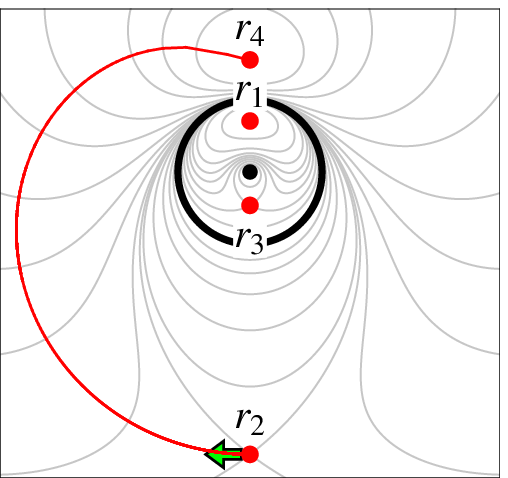} 
\caption{}
\label{fig:reachable}
\end{figure} 
\end{example}

\arraycolsep=2pt\def\arraystretch{1}

\begin{figure}
\begin{algorithm}[H]
\Indentp{-1.5em}
\textsc{Algorithm}: $t \gets$ \textbf{Connectivity}$(f, p, q)$\;
\Input{$f \in \mathbb{Z}[x_1,\ldots,x_n]$, $n \geq 2$, $\deg f \geq 1$, squarefree, with finitely many singular points,\\
$p, q \in \mathbb{Q}^n \cap \{f \neq 0\}$.
}
\Output{$t$,  {\tt true} if the two points $p$ and $q$
  lie in a same semi-algebraically connected component of $\{f \neq 0\}$, else {\tt false}.}
\Indentp{1.5em}

\BlankLine

\nlset{1}
$\begin{array}[t]{rl}
\gamma &\gets \deg(f) + 1\\
c & \gets (0, \dotsc, 0)
\end{array}$\;

\nlset{2}
\Loop{}{
$\begin{array}[t]{rl}
U &\gets (x_1 - c_1)^2 + \dotsb + (x_n - c_n)^2 + 1\\
\mathcal{F} &\gets \bigl\{2 \cdot (\partial_{x_i} f) \cdot U - \gamma \cdot f \cdot (\partial_{x_i}U)\bigr\}_{i=1}^n\\
g &\gets {\displaystyle \frac{f^2}{U^\gamma}}
\end{array}$\;
\lIf{$\left(\begin{array}{l} V_{\new{\mathbb{C}}}(\mathcal{F}) \text{ is zero-dimensional and}\\
\forall r \in V_{\new{\mathbb{C}}}(\mathcal{F}), g(r) \neq 0  \implies \det (\Hess g)(r) \neq 0\end{array}\right)$}{
exit \LOOP
} 
\ELSE $c \gets $ perturb current $c$ on the nonnegative integer grid
} 

\nlset{3}
$R \gets V_{\new{\mathbb{R}}}(\mathcal{F}) \setminus V_{\new{\mathbb{R}}}\bigl(f\bigr)$\;

\nlset{4}
$\begin{array}[t]{rl}
A & \gets k \times k \text{ matrix with all entries set to 0,}\\
  & \quad \text{where $k$ is the number of points in $R$}

\end{array}$\;

\nlset{5}
\ForEach{$r \in R$}{
$\begin{array}[t]{rl}
V_r & \gets \text{set of real algebraic orthonormal eigenvectors of $(\Hess g)(r)$ having}\\
&\quad \text{positive eigenvalues and whose first coordinate is non-positive}
\end{array}$\;

  \ForEach{$v \in V_r$}{

  $\begin{array}[t]{rl} 
j_+ & \gets \text{\textbf{Destination}}(g, R, r_i, +v)\\
j_- & \gets \text{\textbf{Destination}}(g, R, r_i, -v)\\
A_{ij_+} & \gets 1\\
A_{ij_-} & \gets 1
\end{array}$
} 
} 

\nlset{6}
$\begin{array}[t]{rl}
M & \gets \text{the reflexive, symmetric and transitive closure of the relation}\\
 & \quad \; \text{represented by the matrix $A$}
\end{array}$\;

\nlset{7}
\lIf{$\nabla g(p) \neq 0$}{$i \gets \text{\textbf{Destination}}\left(g, R, p, \widehat{\nabla g(p)}\right)$}
\lElse{$i \gets$ index of $p$ in $R$}

\nlset{8}
\lIf{$\nabla g(q) \neq 0$}{$j \gets \text{\textbf{Destination}}\left(g, R, q, \widehat{\nabla g(q)}\right)$}
\lElse{$j \gets$ index of $q$ in $R$}

\nlset{9}
\Return $t \gets$ \texttt{true} if $M_{ij} = 1$, else $t \gets$ \texttt{false};

\BlankLine

\Indentp{-1.5em}
\hrulefill

\BlankLine
\BlankLine

\textsc{Algorithm}: $i \gets$ \textbf{Destination}$(g, R, p, v)$\;
\Input{$g \colon \mathbb{R}^n \to \mathbb{R}$, $C^2$ function,\\
$R = \{R_1, \dotsc, R_k\}$, each $R_i \in \mathbb{A}^n$,\\
$p \in \mathbb{A}^n$,\\
$v \in \mathbb{A}^n$, $\lVert v \rVert = 1$, \\
 such that there exists a unique $r \in R$
 reachable from $p$ using $g$ and $v$.
}
\Output{$i$, the positive integer such that $r = R_i$.
}
\end{algorithm}
\caption{}\label{fig:algorithms}
\end{figure}

We state the specification for the algorithm \textbf{Destination} in
Figure~\eqref{fig:algorithms} and give an example input and output in the following example. Note that throughout this paper we will denote the set of algebraic numbers by $\mathbb{A}$.

\begin{example} Let $g$ be as in \eqref{eq:g}, $R = \{r_1, r_2, r_3, r_4\}$
    be the list of points in red, and $v$ be the vector shown as the green
    arrow in Figure~\eqref{fig:reachable}, respectively. Let $p = r_2$. The point $r_4$ is the unique point that is reachable from $r_2$ using $g$ and $v$. Hence the output of \textbf{Destination}$(g, R, p, v)$ would be 4.
\end{example}

\subsection{Description of Algorithm \textbf{Connectivity}}

In this subsection, we will illustrate the steps of the algorithm
\textbf{Connectivity} using the toy problem given in
Example~\ref{ex:toy}. We will provide several pictures in the hope of
aiding intuitive understanding of what each step does. Of course, the
algorithms do not draw the pictures. We state the steps of
\textbf{Connectivity} in Figure~\eqref{fig:algorithms}. We use the
following notations. For a family $\mathcal{F} = \{f_1, \dotsc, f_n\}$
of polynomials in $\ZZ[x_1, \dotsc, x_n]$, we let
$V_\mathbb{R}(\mathcal{F})$ \new{(resp. $V_\mathbb{C}(\mathcal{F})$)}
denote the zero-locus in $\mathbb{R}^n$ \new{(resp. $\mathbb{C}^n$)}
of the polynomials in $\mathcal{F}$. For a $C^2$ function $g$ we let
$\Hess g$ denote the Hessian matrix of $g$.

\begin{remark} The algorithm \textbf{Connectivity} consists of three main stages.
\begin{enumerate}
\item Using $f$, compute ``interesting'' points on each connected component of $\{f \neq 0\}$. Create a function $g$ with desirable properties, one being that $g = 0$ if and only if $f = 0$. Use $g$ and the ``interesting'' points to form some vectors. 
\item Connect the ``interesting'' points on each connected component of $\{g \neq 0\}$ using the vectors and trajectories of $\nabla g$ to create a connectivity matrix $M$ by using \textbf{Destination}.
\item Determine the connectivity of $p$ and $q$ using $M$ and trajectories of $\nabla g$, again using \textbf{Destination}.
\end{enumerate}
The first and second stage are much more time-consuming than the third one. Fortunately, one needs to carry out the first and second stage {\em only once\/} for a given $f$, since it  depends only on $f$.
\end{remark}

\begin{example}~ We give a sample run of \textbf{Connectivity} using the toy example from Example~\ref{ex:toy}.

\begin{glists}{0em}{1em}{1em}{0.3em}

\itemr{\textbf{Input}.} $f = -2 x_1^2 + x_1^4 - 2 x_2^2 + 2 x_1^2 x_2^2 +
    x_2^4$, $p = \left(19/5, -1/2\right), q = \left(-9/10, -14/5\right)$
    are the blue points in Figure~\eqref{fig:fplottrue}.
\begin{itemize}
\item Here, $n = 2$, $\deg f = 4$, and $f$ is a squarefree polynomial with exactly one singular point at $(0,0)$.
\end{itemize} 
\itemr{1.} Initially, we have 
\begin{align*}
\gamma &= 5,\\
c &= (0, 0).
\end{align*}

\itemr{2.} In the first iteration of the loop we have
\begin{align*}
U &= x_1^2 + x_2^2 + 1,\\
\mathcal{F} &= \left\{-2 x_1^5-4 x_2^2 x_1^3+20 x_1^3-2 x_2^4 x_1+20 x_2^2 x_1-8 x_1,\right.\\
& \qquad \left.-2 x_2^5-4 x_1^2 x_2^3+20 x_2^3-2 x_1^4 x_2+20 x_1^2 x_2-8 x_2\right\},\\
g &=  \frac{\left(-2 x_1^2 + x_1^4 - 2 x_2^2 + 2 x_1^2 x_2^2 + x_2^4\right)^2}{\left(x_1^2+x_2^2+1\right)^5}. 
\end{align*}

The current $V_{\new{\mathbb{C}}}(\mathcal{F})$ is one-dimensional. In
Figure~\eqref{fig:unperturbedwithg} we illustrate the contours for the current $g$ in gray and $V_{\new{\mathbb{R}}}(\mathcal{F})$ in red. 
\begin{figure}[ht]
                                \centering
                                \subfloat[]{\includegraphics{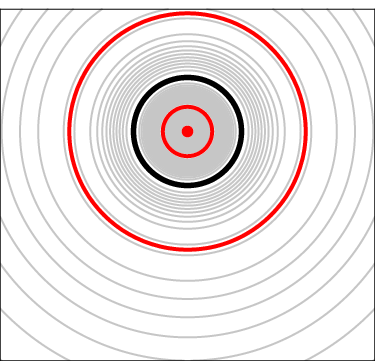}\label{fig:unperturbedwithg}} \qquad
                                \subfloat[]{\includegraphics{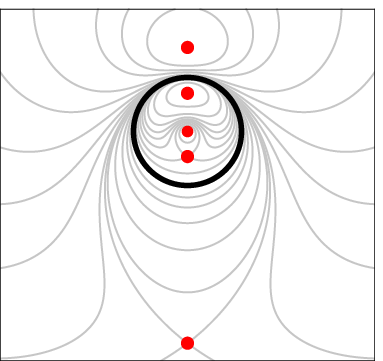}\label{fig:perturbedwithg}} \qquad
                                \subfloat[]{\includegraphics{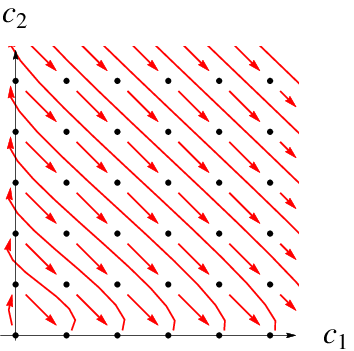}\label{fig:glexorder}}
                                \caption{}                                                      
                          \end{figure}  
We perturb $c$ on the integer grid to be $c = (0,1)$. In the second iteration of the loop we update $U$, $\mathcal{F}$ and $g$ to be
\begin{align*}
U &= x_1^2 + (x_2 - 1)^2 + 1,\\
\mathcal{F} &= \left\{-2 x_1^5-4 x_2^2 x_1^3-16 x_2 x_1^3+28 x_1^3-2 x_2^4 x_1\right.\\
& \qquad -16 x_2^3 x_1+28 x_2^2 x_1+16 x_2 x_1-16 x_1,\\
& \qquad -2 x_2^5-6 x_2^4-4 x_1^2 x_2^3+28 x_2^3+4 x_1^2 x_2^2-4 x_2^2\\
& \qquad \left.-2 x_1^4 x_2+28 x_1^2 x_2-16 x_2+10 x_1^4-20 x_1^2\right\},\\
g &=\frac{\left(-2 x_1^2 + x_1^4 - 2 x_2^2 + 2 x_1^2 x_2^2 + x_2^4\right)^2}{\left(x_1^2+(x_2 - 1)^2+1\right)^5}.
\end{align*}
The new $V_{\new{\mathbb{C}}}(\mathcal{F})$ is zero-dimensional (over
the reals).  We illustrate the perturbed
$V_{\new{\mathbb{R}}}(\mathcal{F})$ as the five red points in
Figure~\eqref{fig:perturbedwithg} along with the contours for the new
$g$. For all five $r \in V_{\new{\mathbb{C}}}(\mathcal{F})$, $\det (\Hess g)(r) \neq 0$
Hence we exit the loop.
\begin{itemize}             
\item One method for perturbing is using graded lexicographic
  order. In Figure~\eqref{fig:glexorder}, if there is an arrow having
  tip at $\alpha$ and tail at $\beta$ then $x^\alpha > x^\beta$ in the
  graded lexicographic order. We can follow the arrows to
  systematically change $(c_1, c_2)$ starting at $(0,0)$. This
  generalizes, of course, to any number of variables.
\item \new{In this step we can use symbolic computation methods based
  e.g. on Gr\"obner bases 
  to compute the dimension of the zero-locus of $\mathcal{F}$ in
  $\mathbb{C}^n$ (see e.g.
  \cite{CoxLittleOShea}).} 
\end{itemize}

\itemr{3.} We illustrate $R$ as the four red points in
    Figure~\eqref{fig:critscontoursg}. Compare this to the five red points
    in Figure~\eqref{fig:perturbedwithg}.
                                                \begin{figure}[ht]
                                \centering
                                                                        \includegraphics{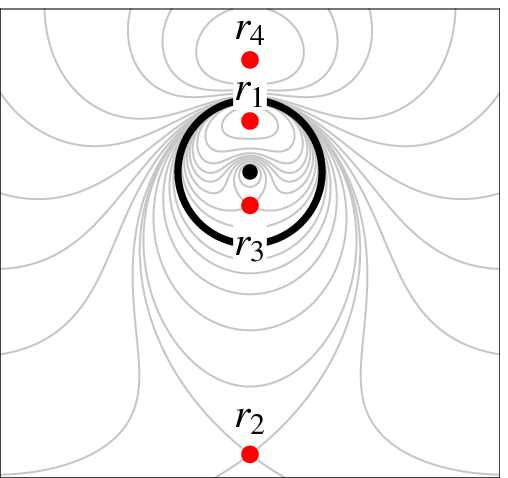}
                                \caption{}
                                \label{fig:critscontoursg}
                          \end{figure}  
\begin{itemize}
\item Note that each connected component of $\{f \neq 0\}$ contains at
  least one point from the set $R$.
\item One may observe from the contour plot of $g$, that the points
  $R$ are critical points of $g$ where $g$ is non-zero.
\item Again, we can use standard symbolic computation methods to
  identify which of the points in $V_\mathbb{R}(\mathcal{F})$ satisfy $f = 0$,
  and then remove them. 

  \new{This can be done by representing $V_\mathbb{C}(\mathcal{F})$ by
    means of the so-called rational univariate representation (see
    e.g. \cite{ABRW96,GiMo89,Rou99}) and Thom-encodings for the real
    roots (see e.g. \cite{BaPoRo06}).}
\end{itemize}

\itemr{4.} We have $A = \begin{bmatrix}0 & 0 & 0 & 0\\ 0 & 0 & 0 & 0\\ 0 & 0 & 0 & 0\\ 0 & 0 & 0 & 0\end{bmatrix}$ since $k = 4$.

\itemr{5.} Suppose $r = r_1$ or $r = r_4$. The matrix $(\Hess g)(r)$ has no positive eigenvalues. Hence $V_r = \emptyset$ and the body of the second \textbf{foreach} loop does not execute. 

Suppose instead that $r=r_2$ or $r = r_3$, then the matrix $(\Hess g)(r)$
has one positive eigenvalue. For this eigenvalue, there is one
corresponding real algebraic unit eigenvector whose first coordinate is
non-positive. If $r = r_2$, the eigenvector is $v = \langle -1, 0\rangle$.
We draw the two vectors $v$ and $-v$ as a dark green and light green
outward pointing arrows from $r_2$ in Figure~\eqref{fig:fatarrows},
respectively. If $r = r_3$, the eigenvector is $v = \langle -1, 0\rangle$.
We draw the two vectors $v$ and $-v$ as a dark blue and light blue outward
pointing arrows from $r_3$ in Figure~\eqref{fig:fatarrows}, respectively.

\begin{figure}[ht]
\centering
\subfloat[]{\includegraphics{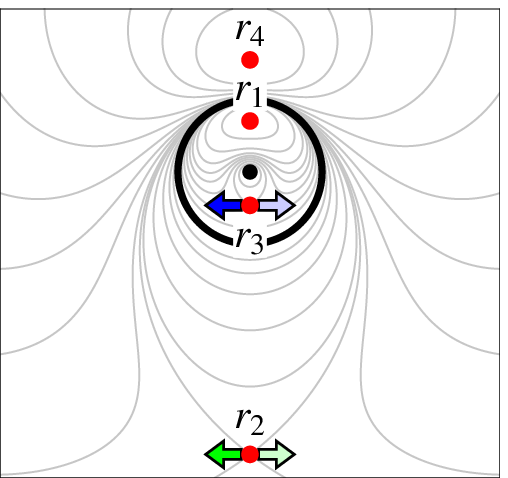}\label{fig:fatarrows}} \qquad
\subfloat[]{\includegraphics{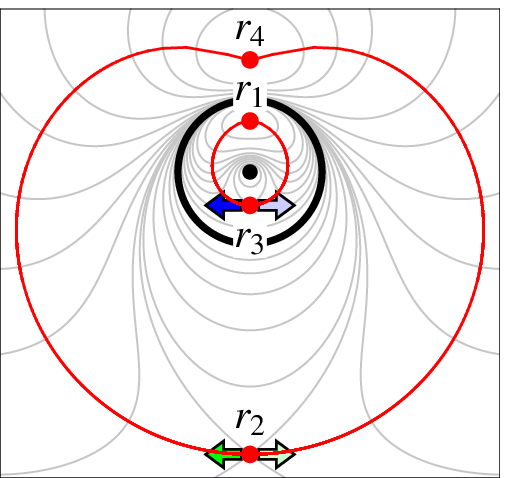}\label{fig:fatarrowsroadmap}}
\caption{}
\end{figure}  
We let $V_{r_2}$ be the set consisting of a single unit eigenvector, $\langle -1, 0\rangle$, represented by the dark green arrow, and $V_{r_3}$ be the set consisting of a single unit eigenvector, $\langle -1, 0\rangle$, represented by the dark blue arrow.

Figure~\eqref{fig:fatarrowsroadmap} shows four steepest ascent paths. The steepest ascent path starting from the point $r_2$ in the direction of the dark green vector approaches the point $r_4$. Similarly, the steepest ascent path starting from the point $r_2$ in the direction of the light green vector approaches the point $r_4$. Hence when $r = r_2$, the inner \textbf{foreach} loop executes once because there is only one vector in $V_{r_2}$ and 
\begin{align*}
j_+ &\gets \text{\textbf{Destination}}\left(g, R, r_2, \includegraphics{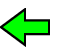}\right) = 4,\\
j_- &\gets \text{\textbf{Destination}}\left(g, R, r_2, \includegraphics{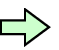}\right) = 4,\\
A_{24} &\gets 1,\\
A_{24} &\gets 1.
\end{align*}
The steepest ascent path starting from the point $r_3$ in the direction of the dark blue vector approaches the point $r_1$. Similarly, The steepest ascent path starting from the point $r_3$ in the direction of the light blue vector approaches the point $r_1$. Hence when $r = r_3$, the inner \textbf{foreach} loop executes once because there is only one vector in $V_{r_3}$ and 
\begin{align*}
j_+ &\gets \text{\textbf{Destination}}\left(g, R, r_3, \includegraphics{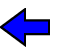}\right) = 1,\\
j_- &\gets \text{\textbf{Destination}}\left(g, R, r_3, \includegraphics{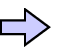}\right) = 1,\\
A_{31} &\gets 1,\\
A_{31} &\gets 1.
\end{align*}
The matrix $A$ has the form
\[ 
A = \begin{bmatrix}
0 & 0 & 0 & 0\\
0 & 0 & 0 & 1\\
1 & 0 & 0 & 0\\
0 & 0 & 0 & 0
\end{bmatrix}.
\]
\begin{itemize}
\item For each $r \in R$, the Hessian $(\Hess g)(r)$ is a real
  symmetric matrix. It is a well known fact that the associated
  eigenvalues are all real and the eigenvectors corresponding to
  different eigenvalues are orthogonal. However, there is no
  restriction that the eigenvalues be simple, so it is possible that
  the geometric multiplicity of a positive eigenvalue is greater than
  one. In this case, finding two linearly independent eigenvectors for
  a given positive eigenvalue will suffice, as one can use the
  Gram-Schmidt process to find an orthonormal basis.
\item Using standard symbolic computation techniques, we can find the
  eigenvalues and eigenvectors exactly because each point in $R$ is an
  algebraic number and the elements of $\Hess g$ are rational
  functions with integer coefficients. 

  \new{This can be done by e.g. solving the system defined by the vanishing
  of the polynomials in ${\cal F}$, the equations
  $\Hess g . V =\lambda V$ and $V\neq \mathbf{0}$ where the entries
  $\lambda$ are new variables. When $V_{\mathbb{C}}({\cal F})$ has
  dimension zero and does not meet the hypersurface defined by
  $\det(\Hess g)=0$, this system has dimension $0$ also.}

\item Note that every steepest ascent path approaches a point in the
  set $R$.  In fact, $g$ was constructed to ensure that the path never
  spirals in a bounded region or goes forever into the infinity.
\item It is crucial to observe that every two points in $R$ can be
  connected if and only if they are connected via the above computed
  paths.
\end{itemize}

\itemr{6.} We have $M = 
\begin{bmatrix}
1 & 0 & 1 & 0\\
0 & 1 & 0 & 1\\
1 & 0 & 1 & 0\\
0 & 1 & 0 & 1
\end{bmatrix}$.
\begin{itemize}
           \item 
               Note that we can use the matrix $M$ to check whether two points $r_i, r_j \in R$ lie in a same connected component of $\{f \neq 0\}$ by checking the $(i,j)$ entry of $M$. 
           \item We call $M$ a \textit{connectivity matrix}.
           \end{itemize}      
 
\itemr{7.} For the input point $p$ shown in Figure~\eqref{fig:roadmaptrue}, $\nabla g(p) \neq 0$.
\begin{figure}[ht]
\centering
\includegraphics{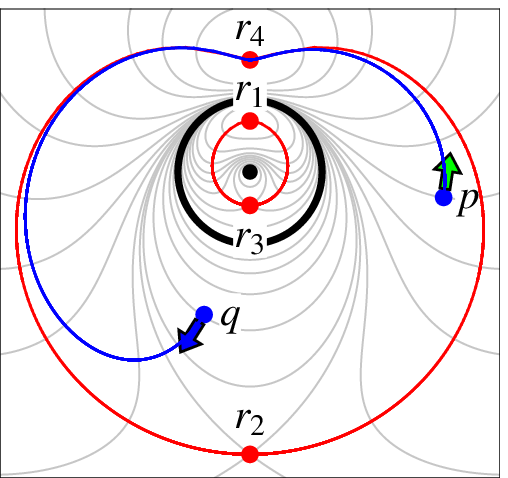}
\caption{}
\label{fig:roadmaptrue}
\end{figure} 
We draw the vector $\widehat{\nabla g(p)}$ as the green arrow. We see that steepest ascent from $p$ approaches the point $r_4$ in $R$. Hence 
\[
i \gets \text{\textbf{Destination}}\left(g, R, p, \includegraphics{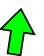}\right) = 4.
\]

\itemr{8.} For the input point $q$ shown in Figure~\eqref{fig:roadmaptrue}, $\nabla g(q) \neq 0$. We draw the vector $\widehat{\nabla g(q)}$ as the blue arrow. We see that steepest ascent from $q$ approaches the point $r_4$ in $R$. Hence 
\[
j \gets \text{\textbf{Destination}}\left(g, R, q, \includegraphics{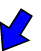}\right) = 4.
\]

\itemr{9.} We note that $M_{44} = 1$ and thus the two points $p$, $q$ can be connected. We set $t$ = \texttt{true}.

\itemr{\textbf{Output}.} $t = \texttt{true}$.
\end{glists}
\end{example}


\section{Correctness}\label{sec:correctness}

In this section, we will prove the correctness of the algorithm \textbf{Connectivity} in the form of Theorem~\ref{thm:correct}. 

\begin{theorem}\label{thm:correct}
Algorithm \textbf{Connectivity} is correct.
\end{theorem}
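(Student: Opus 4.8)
The plan is to reduce the correctness of \textbf{Connectivity} to a Morse-theoretic analysis of the ascending gradient flow of the function $g$ produced in Step~2, treating \textbf{Destination} as a black box meeting its input/output specification. First I would record the elementary properties of $g = f^{2}/U^{\gamma}$, where $U = (x_{1}-c_{1})^{2}+\dots+(x_{n}-c_{n})^{2}+1$ and $\gamma = \deg f + 1$: since $U \ge 1$ on $\RR^{n}$, $g$ is real analytic on all of $\RR^{n}$; a direct computation gives $\partial_{x_{i}} g = f\,\mathcal{F}_{i}/U^{\gamma+1}$, so $\nabla g(x) = 0$ exactly when $f(x) = 0$ or $\mathcal{F}(x) = 0$, whence the critical points of $g$ lying in $\{f\neq 0\}$ are precisely the points of the set $R$ computed in Step~3; furthermore $g \ge 0$ with $\{g = 0\} = \{f = 0\}$, and because $\deg f^{2} = 2\deg f < 2\gamma = \deg U^{\gamma}$ we have $g(x) \to 0$ as $\lVert x\rVert \to \infty$, so every superlevel set $\{g \ge \varepsilon\}$ with $\varepsilon > 0$ is compact. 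Granting the termination result of Section~\ref{sec:termination} (so that the loop exits), at that point $V_{\mathbb{C}}(\mathcal{F})$ is zero-dimensional, hence $R$ is finite, and $\det(\Hess g)(r) \neq 0$ for every $r \in R$; in other words $g$ restricted to the open set $\{f\neq 0\}$ is a Morse function whose critical set is exactly $R$.

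Next I would establish the dynamical facts behind the calls to \textbf{Destination}. Along any trajectory $\phi$ of $\nabla g$ the value $g\circ\phi$ is strictly increasing, so a trajectory emanating from a point $x_{0}$ with $g(x_{0}) > 0$ is confined to the compact set $\{g \ge g(x_{0})\} \subseteq \{f\neq 0\}$; applying the {\L}ojasiewicz gradient inequality to the real-analytic function $g$, $\phi(t)$ converges as $t \to \infty$ to a single critical point $r$, and $g(r) \ge g(x_{0}) > 0$ forces $r \in R$. Thus every steepest ascent path used by the algorithm has a destination, lying in $R$. Linearizing the flow at a critical point $r$ shows that a trajectory with $\lim_{t\to 0^{+}}\phi(t) = r$ must leave along the positive eigenspace of $(\Hess g)(r)$ (the unstable manifold of $r$), which is why Step~5 enumerates the unit eigenvectors of the positive eigenvalues; here one must also check that each call to \textbf{Destination} satisfies its precondition, i.e.\ that the reachable point of $R$ is unique --- immediate when $v$ spans a one-dimensional positive eigenspace, so the local unstable trajectory in that direction is a single arc, and requiring a short additional argument, along the lines of the remark following Step~5, when a positive eigenvalue has higher multiplicity.

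The heart of the proof is the claim that, for $R_{i}, R_{j} \in R$, one has $M_{ij} = 1$ if and only if $R_{i}$ and $R_{j}$ lie in the same connected component of $\{f \neq 0\}$. The implication $M_{ij} = 1 \Rightarrow \text{(same component)}$ is the easy one: every path recorded in Step~5 lies in $\{g > 0\} \subseteq \{f\neq 0\}$ and joins its two endpoints, so the relation encoded by $A$ respects components, and Step~6 only takes its reflexive--symmetric--transitive closure. For the converse, fix a connected component $C$ of $\{f\neq 0\}$. A compactness argument shows $g|_{C}$ attains a positive maximum inside $C$ (its supremum is positive, else $f \equiv 0$ on $C$, and a maximizing sequence stays in a compact superlevel set), so $C$ contains a local maximum of $g$; moreover every $r \in R \cap C$ is $M$-related to such a maximum, by following a chain of Step~5 trajectories --- as long as the current point of $R$ is not a local maximum of $g$ it has a positive Hessian eigenvalue, hence an outgoing trajectory to a point of $R$ with strictly larger $g$-value, and since $g$ has finitely many critical values the chain terminates at a local maximum. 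It then remains to show that all local maxima of $g$ contained in $C$ are pairwise $M$-related. For this I would stratify $C$ by the stable manifolds $W^{s}(c)$ of the ascending flow, using that $\dim W^{s}(c)$ equals the index of $c$ (the number of negative eigenvalues of $(\Hess g)(c)$): the closure in $C$ of the union of the $W^{s}(c)$ with $c$ of index $\le n - 2$ has topological dimension $\le n - 2$, so deleting it from the connected $n$-manifold $C$ leaves a connected set covered by the open basins $W^{s}(m)$ of the local maxima $m$ together with the $(n-1)$-dimensional stable manifolds $W^{s}(s)$ of the critical points $s$ of index $n-1$ --- whose positive eigenspace is one-dimensional, so that Step~5 follows a single pair of trajectories $\pm v$ from $s$, landing in at most two of the basins. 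A standard connectedness bookkeeping then shows that the graph on the set of local maxima of $g$ in $C$, with an edge for each such $s$, is connected, which is exactly the assertion that these maxima are $M$-related.

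Finally I would dispatch Steps~7--9. If $\nabla g(p) \neq 0$, steepest ascent from $p$ reaches, by the convergence argument above, a point $R_{i}$, and since the trajectory stays in $\{g > 0\} \subseteq \{f\neq 0\}$, the index $i$ labels a point of $R$ in the same component as $p$; if $\nabla g(p) = 0$ then $f(p) \neq 0$ forces $p \in R$ and $i$ is its own index. The same applies to $q$ and $j$. Combining with the core claim, $p$ and $q$ lie in the same component of $\{f\neq 0\}$ if and only if $R_{i}$ and $R_{j}$ do, i.e.\ if and only if $M_{ij} = 1$, which is the Boolean returned in Step~9. I expect the main obstacle to be the ``only if'' half of the core claim --- in particular the verification that the critical points of index $n-1$ alone suffice to link the local maxima within a component, and the codimension/connectedness bookkeeping this requires --- together with the analytic inputs on which the use of \textbf{Destination} rests: convergence of bounded gradient trajectories via {\L}ojasiewicz, and uniqueness of the destination of a trajectory leaving a critical point in a prescribed eigendirection.
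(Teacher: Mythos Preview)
Your plan follows the paper's strategy closely: verify that $g=f^{2}/U^{\deg f+1}$ is a routing function, show that every ascending trajectory in a component of $\{g\neq0\}$ converges to a routing point, decompose each component into stable manifolds of the routing points, and argue that the Step~5 eigenvector trajectories connect all routing points in a component. One minor difference: for convergence of bounded gradient trajectories you invoke the {\L}ojasiewicz inequality (legitimate, since $g$ is real analytic), whereas the paper argues directly from the boundedness of $\nabla g$ and $\Hess g$ (the fifth routing-function axiom) that $\phi'$ is uniformly Lipschitz and $\int_{0}^{\infty}\lVert\phi'\rVert^{2}\,dt<\infty$, forcing $\phi'(t)\to 0$.

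The substantive difference is in how the local maxima within a component are linked. You propose to delete the closure of the union of the stable manifolds of index $\le n-2$, assert this closure has topological dimension $\le n-2$ so the complement remains connected, and then let only the index-$(n-1)$ saddles, via their one-dimensional unstable manifolds, bridge the max-basins. The paper instead shows (Lemma~\ref{lem:boundaryconnect2max}) that \emph{any} routing point $q$ on $\partial W^{s}(r)$ for a local maximum $r$, of whatever index, is joined to $r$ by an outgoing-eigenvector chain: if an outgoing eigenvector at $q$ is not tangent to $\partial W^{s}(r)$ then one of the two associated trajectories enters $W^{s}(r)$ and reaches $r$; if it is tangent, the trajectory stays on the boundary (Lemma~\ref{lem:routstayinboundary}) and lands on another boundary routing point with strictly larger $g$-value, and one iterates through finitely many routing points. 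Combined with Lemma~\ref{lem:intersectmaxstable} (adjacent max-basins share a boundary routing point), this connects all maxima. The paper's tangent/non-tangent dichotomy buys robustness your route lacks: it needs neither the closure of the low-index strata to have dimension $\le n-2$ (a Morse--Smale-type statement not established here; closures of immersed submanifolds can in principle jump in dimension), nor the two unstable branches from an index-$(n-1)$ saddle to land in the two particular adjacent basins rather than on some lower-index critical point. You correctly flagged this bookkeeping as the expected obstacle; the boundary-following argument of Lemma~\ref{lem:boundaryconnect2max} is precisely the device that absorbs it.
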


\noindent It essentially amounts to showing that any two ``interesting'' points in the same connected component of $\{g \neq 0\}$ are connected by a particular set of steepest ascent paths. In order to make the claim precise, we will need to recall and introduce some notations and notions. We assume throughout this section that $g \colon \RR^n \to \RR$ is a $C^2$ function with $n \geq 2$. The examples in this section will assume $g$ takes the form \eqref{eq:g}.

\begin{definition}\label{def:routing} A critical point $p$ of $g$ is called a \textit{routing point} of $g$ if $g(p) \neq 0$. Let $R$ be the set of routing points of $g$. We call $g$ a \textit{routing function} if the following conditions are satisfied:
\begin{itemize}
\item For all $x$, $g(x) \geq 0$.
\item For all $\varepsilon > 0$, there exists $\delta > 0$, such that for all $x$, $\lVert x \rVert \geq \delta$ implies $g(x) \leq \varepsilon$.
\item $R$ is finite.
\item For all $x \in R$, $x$ is nondegenerate; that is, $(\Hess g)(x)$ is non-singular.
\item The norms of the first and second derivatives of $g$ are bounded.
\end{itemize}
\end{definition}

Intuitively, the second condition in the routing function definition says that $g$ vanishes at infinity; that is, as $\lVert x \rVert \to \infty$, then $g(x) \to 0$.

\begin{example}In Figure~\eqref{fig:critscontoursg} we show the contours of $g$ along with the routing points of $g$ as red dots. The black curve and black dot is the set of points where $g = 0$. One may easily check that $g$ in \eqref{eq:g} satisfies the conditions to show $g$ is a routing function. 
\end{example}

In our forthcoming examples, we will let $R = \{r_1, r_2, r_3, r_4\}$ denote the set of routing points of $g$ from \eqref{eq:g}.

\begin{definition}
Let $A$ be a real symmetric matrix and let $v$ be a unit eigenvector of $A$ with corresponding eigenvalue $\lambda \neq 0$. We say $v$ is an \textit{outgoing eigenvector} if $\lambda > 0$.
\end{definition}

\begin{example} 
In Figure~\eqref{fig:fatarrows}, outgoing eigenvectors of $(\Hess g)(r_i)$ are shown as arrows pointing outward from the point $r_i$.
\end{example}

\begin{definition}\label{def:connsteepasc} 
Let $p \neq q$ be two points in $\mathbb{R}^n$. We say $p$ and $q$ are \textit{connected by steepest ascent paths using outgoing eigenvectors of $g$} if there exist functions $\phi_1, \dotsc, \phi_{k+1}$ and routing points $r_1, \dotsc, r_k$ of $g$ such that 
\begin{itemize}
\item if $\nabla g(p) = 0$, then $\phi_1 = p$ and $r_1 = p$, otherwise, $\phi_1$ is a trajectory of $\nabla g$ through $p$ using $\widehat{\nabla g(p)}$ and $\dest(\phi_1) = r_1$,
\item if $\nabla g(q) = 0$, then $\phi_{k+1} = q$ and $r_k = q$, otherwise, $\phi_{k+1}$ is a trajectory of $\nabla g$ through $q$ using $\widehat{\nabla g(q)}$ and $\dest(\phi_{k+1}) = r_k$,
\item for all $2 \leq i \leq k$, there exists an outgoing eigenvector $v_{i-1}$ of $(\Hess g)(r_{i-1})$ such that $\phi_i$ is a trajectory of $\nabla g$ through $r_{i-1}$ using $v_{i-1}$ and $\dest(\phi_i) = r_i$, or, there exists an outgoing eigenvector $v_i$ of $(\Hess g)(r_i)$ such that $\phi_i$ is a trajectory of $\nabla g$ through $r_i$ using $v_i$ and $\dest(\phi_i) = r_{i-1}$.
\end{itemize}
Collectively, we call $r_1, \dotsc, r_k$ and $\phi_1, \dotsc, \phi_{k+1}$ a \textit{connectivity path for $p$ and $q$}.
\end{definition}

\begin{example} In Figure~\eqref{fig:reachable}, we see $r_2$ is connected to $r_4$ by steepest ascent paths using outgoing eigenvectors of $g$. This is because there exist routing points $r_2, r_4$ and functions $\phi_1 = r_2$, $\phi_3 = r_4$, and $\phi_2$, a trajectory of $\nabla g$ through $r_2$ using $v$, an outgoing eigenvector of $(\Hess g)(r_2)$, such that $\dest(\phi_2) = r_4$. 
\end{example}

\begin{example} In Figure~\eqref{fig:roadmaptrue}, we see $p$ is connected to $q$ by steepest ascent paths using outgoing eigenvectors of $g$. This is because there exists a routing point $r_4$ and functions $\phi_1$, $\phi_2$, such that $\phi_1$ is a trajectory of $\nabla g$ through $p$ using $\widehat{\nabla g(p)}$ and $\dest(\phi_1) = r_4$, and $\phi_2$ is a trajectory of $\nabla g$ through $q$ using $\widehat{\nabla g(q)}$ and $\dest(\phi_2) = r_4$.
\end{example}

We now state the theorem we will use to help us prove the correctness of the algorithm \textbf{Connectivity}. 

\begin{theorem}\label{thm:connresult} If $g$ is a routing function then any two points in a same connected component of $\{g \neq 0\}$ are connected by steepest ascent paths using outgoing eigenvectors of $g$.
\end{theorem}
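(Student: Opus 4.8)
The plan is to adapt the classical Morse-theoretic handle decomposition of superlevel sets, arguing by downward induction on the finitely many positive critical values of $g$. Since $g\ge 0$, the set $\{g\neq 0\}$ equals the open set $\{g>0\}$, and the only critical points outside $\{g>0\}$ are global minima at level $0$; hence the set of critical values of $g$ is $\{0\}\cup\{g(r):r\in R\}$, a finite set, which I write $0<c_1<\cdots<c_m$, with the convention $c_0:=0$. In particular every value in $(0,\infty)\setminus\{c_1,\dots,c_m\}$ is regular, so no Sard-type input is needed here. Because $g$ is continuous, bounded (it vanishes at infinity and is continuous on compact balls), and $\nabla g$ is bounded, the flow of $\phi'=\nabla g(\phi)$ is complete, every superlevel set $\{g\ge\varepsilon\}$ with $\varepsilon>0$ is compact, and $c_m=M:=\max g$. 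The first technical ingredient is a \emph{convergence lemma}: along a trajectory of $\nabla g$ started at a point with $g>0$, the value $g\circ\phi$ is nondecreasing, hence converges to some $c^\ast>0$, the trajectory stays in the compact set $\{g\ge g(\phi(0))\}$, and — since $t\mapsto\lVert\nabla g(\phi(t))\rVert^2$ has finite integral $c^\ast-g(\phi(0))$ and bounded derivative (here the boundedness of $\Hess g$ enters) — one gets $\nabla g(\phi(t))\to 0$, so the $\omega$-limit set is a connected subset of the finite set $R$, i.e. a single routing point. Thus $\dest$ is well-defined on all of $\{g>0\}$ and lies in the same connected component as the starting point; and by the (un)stable manifold theorem at a routing point $r$ with $k_r>0$ positive Hessian eigenvalues, each branch of the unstable manifold $W^u(r)$ is the image (after reparametrization) of a trajectory through $r$ using an outgoing eigenvector, whose destination is a routing point of strictly larger $g$-value (in particular, iterating, every routing point is joined to a local maximum of $g$ by steepest ascent paths using outgoing eigenvectors).

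The heart of the argument is the inductive claim $P(j)$, for $j=m,m-1,\dots,1$: for a regular value $c\in(c_{j-1},c_j)$, any two routing points in the same connected component of $\{g\ge c\}$ are connected by steepest ascent paths using outgoing eigenvectors of $g$ (note that a routing point in $\{g\ge c\}$ has value $\ge c_j$). For $j=m$ this holds because, for $c\in(c_{m-1},M)$, each component of $\{g\ge c\}$ contains exactly one routing point, namely a global maximum. For the step from $P(j+1)$ to $P(j)$, I would fix regular values $c\in(c_{j-1},c_j)$ and $c^+\in(c_j,c_{j+1})$ and apply the handle-attachment theorem to the compact region $\{c\le g\le c^+\}$, whose only critical points lie at level $c_j$ and are nondegenerate: $\{g\ge c\}$ deformation-retracts onto $\{g\ge c^+\}$ with, in a small neighbourhood of each routing point $r^\ast$ of level $c_j$, one handle of dimension $k_{r^\ast}$ whose core runs along $W^u(r^\ast)$ (several level-$c_j$ routing points being treated simultaneously in disjoint neighbourhoods). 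If $k_{r^\ast}=0$ then $r^\ast$ is a local maximum born in a new one-point component, so there is nothing to connect. If $k_{r^\ast}\ge 2$ the handle attaches along a connected set, so it is absorbed into a single component $C^+$ of $\{g\ge c^+\}$; a generic trajectory on $W^u(r^\ast)$ uses an outgoing eigenvector (the slowest positive eigendirection) and, by the convergence lemma, has destination a routing point of value $>c^+$ lying in $C^+$, so $P(j+1)$ applied inside $C^+$ finishes this case. The decisive case is $k_{r^\ast}=1$: the positive Hessian eigenvalue of $\Hess g(r^\ast)$ has a one-dimensional eigenspace spanned by a unit vector $v$, the two branches of $W^u(r^\ast)$ are exactly the trajectories through $r^\ast$ using $+v$ and $-v$ (both outgoing eigenvectors), and their destinations $r^{\ast\ast}_+,r^{\ast\ast}_-$ are routing points of value $>c^+$ sitting in the two components $C^+_+,C^+_-$ of $\{g\ge c^+\}$ to which the two ends of the $1$-handle attach; crossing $c_j$ merges $C^+_+$ and $C^+_-$. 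So, given routing points $r,r'$ of value $>c$ in one component of $\{g\ge c\}$: first, if $r$ or $r'$ equals $r^\ast$, replace it by the appropriate $r^{\ast\ast}_\pm$ via an outgoing-eigenvector trajectory; then, if the common component was untouched by the handle, apply $P(j+1)$ directly, and if it is $C^+_+\cup C^+_-\cup(\text{handle})$, connect $r$ to the $r^{\ast\ast}_\pm$ on its side by $P(j+1)$, cross over by the two trajectories $r^{\ast\ast}_+\leftarrow r^\ast\to r^{\ast\ast}_-$, and connect to $r'$ on the other side by $P(j+1)$. This proves $P(j)$.

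The theorem then follows from $P(1)$: given $p\neq q$ in the same component $C$ of $\{g\neq 0\}$, the convergence lemma gives the trajectory $\phi_1$ through $p$ using $\widehat{\nabla g(p)}$ (or $\phi_1=p$ if $p$ is already a routing point) a destination $r_p\in C$, and likewise $r_q\in C$; choosing a path from $p$ to $q$ inside the open connected set $C$, its image is compact so $g\ge\varepsilon_0$ there for some $\varepsilon_0>0$, and for the regular value $c:=\min(\varepsilon_0,c_1/2)\in(0,c_1)$ the points $r_p$ and $r_q$ lie in the same component of $\{g\ge c\}$; by $P(1)$ they are connected by steepest ascent paths using outgoing eigenvectors, and prepending $\phi_1$ and appending $\phi_{k+1}$ yields a connectivity path for $p$ and $q$.

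I expect the main obstacle to be the inductive step, specifically the careful identification of the Morse-theoretic handle picture with the dynamical one: verifying that the two components of $\{g\ge c^+\}$ that merge when $c$ crosses $c_j$ are exactly the ones containing the two destinations of the unstable-manifold branches of a routing point $r^\ast$ with $k_{r^\ast}=1$, and organising the bookkeeping when a level carries several critical points or when $k_{r^\ast}\neq 1$. The convergence lemma and the appeal to the (un)stable manifold theorem are routine, but they genuinely require all of the routing-function hypotheses — $g\ge 0$, vanishing at infinity, $R$ finite and nondegenerate, and the boundedness of the first two derivatives of $g$.
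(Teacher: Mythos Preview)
Your proposal is correct and takes a genuinely different route from the paper's own proof. The paper does \emph{not} induct on superlevel sets or invoke handle attachment; instead it decomposes each connected component $D$ of $\{g\neq 0\}$ as a disjoint union of \emph{stable} manifolds $W^s(r)$ of the routing points, observes that the stable manifolds of the index-$n$ routing points (local maxima) are $n$-dimensional and tile $D$, and then argues adjacency: any routing point of lower index lies on $\partial W^s(m)$ for some local maximum $m$ and can be pushed to $m$ by outgoing-eigenvector trajectories that stay on that boundary, while two adjacent top-dimensional stable cells share a routing point on their common boundary and are thereby linked. Your approach is the dual picture: you track \emph{unstable} manifolds via the handle cores and run a clean downward induction on the finitely many positive critical values, the key case being $k_{r^\ast}=1$ where a $1$-handle merges two components and its two unstable branches are exactly the $\pm v$ outgoing-eigenvector trajectories. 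Your version is closer to textbook Morse theory and has a tidier inductive skeleton; the paper's version avoids superlevel sets entirely and works directly on the open set $D$, trading the induction for a somewhat delicate analysis of how trajectories behave along stable-manifold boundaries (their Lemmas on boundaries of $W^s$). Both rest on the same two pillars you identify --- the convergence lemma for $\dest$ and the (un)stable manifold theorem --- and both need all five routing-function hypotheses in exactly the places you note. The bookkeeping issue you flag (several critical points at one level, or $k_{r^\ast}\ge 2$) is real but routine: handles at one level sit in disjoint Morse charts, and in the $k_{r^\ast}\ge 2$ case the attaching sphere is connected, so the single outgoing-eigenvector trajectory you launch from $r^\ast$ lies in $W^u(r^\ast)$, crosses level $c^+$ on that sphere, and hence lands in the unique component $C^+$.
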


To prove Theorem~\ref{thm:connresult}, we will use results motivated from the field of Morse theory. In Morse theory, one analyzes the topology of a manifold by studying differentiable functions on that manifold. In our case, we will be studying the manifold $\RR^n$ and decomposing a region into sets of similar behavior based on trajectories. 

\begin{definition} If $p \in \RR^n$ is a nondegenerate critical point of $g$, then the \textit{stable manifold of $p$} is defined to be 
\[
W^s(p) = \set{x \in \RR^n}{\dest\left(\phi_x\right) = p} \cup \{p\}. 
\]
where $\phi_x$ is the trajectory of $\nabla g$ through $x$ using $\widehat{\nabla g(x)}$. 
\end{definition}

Similar definitions can be found in the literature about Morse-Smale gradient fields \cite[Definition 4.1, pp. 94]{Banyaga2004}. It is important to note that the fact a stable manifold is actually a manifold (in the sense of differential geometry) is non-trivial and relies on the genericity of $g$ \cite[Section 4.1]{Banyaga2004}. Generally, stable manifolds cannot be found analytically. Instead, these manifolds must be ``grown'' from the fixed point $p$ using local knowledge \cite{KrauskopfEtAl2005}.

\begin{example}\label{ex:stunstmani} Figure~\eqref{fig:stablemans} illustrates the stable manifolds for the routing points of $g$. The stable manifolds for $r_1$ and $r_4$ are the blue and green regions, respectively. The stable manifold for $r_2$ is the blue line while the stable manifold for $r_3$ is the green line. 
\begin{figure}[ht]
\centering
\includegraphics{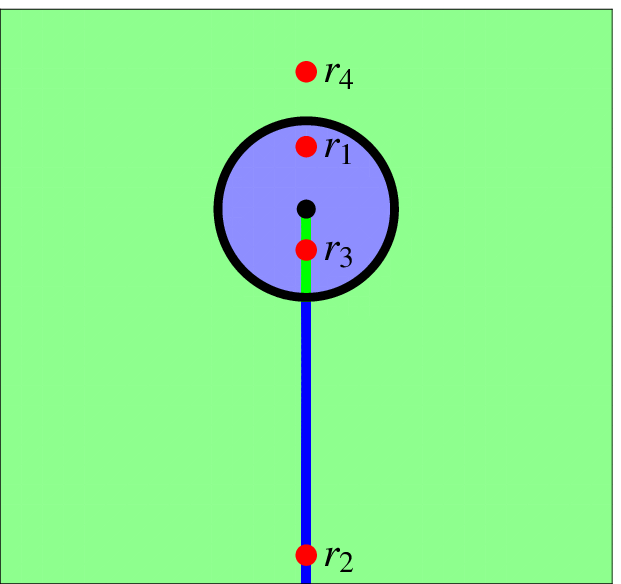}
\caption{}
\label{fig:stablemans}
\end{figure}
\end{example}

According to Figure~\eqref{fig:stablemans}, it appears we can decompose each connected component of $\{g \neq 0\}$ into a disjoint union of stable manifolds. We will use the following lemmas to show that if $g$ is a routing function, we can in fact decompose a connected component into a disjoint union of stable manifolds. First, we observe the simple fact that $g$ strictly increases along a steepest ascent path.

\begin{lemma}\label{lem:trajincr} Let $p \in \RR^n$. If $p$ is not a critical point of $g$ then $g$ increases along a trajectory of $\nabla g$ through $p$ using $\widehat{\nabla g(p)}$. 
\end{lemma}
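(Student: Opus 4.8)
The plan is to differentiate $g$ along the trajectory and read off positivity of the derivative directly from the defining equation~\eqref{eq:phi}. Let $\phi$ be a trajectory of $\nabla g$ through $p$ using $\widehat{\nabla g(p)}$. Such a $\phi$ exists by the standard local existence theory for ordinary differential equations, since $g$ is $C^2$ (so $\nabla g$ is $C^1$, hence locally Lipschitz) and $\nabla g(p)\neq 0$ because $p$ is not a critical point of $g$; in any case the assertion concerns an arbitrary such $\phi$, so existence is not essential to the argument.

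First I would set $h(t) := g\bigl(\phi(t)\bigr)$ for $t>0$. Since $g$ and $\phi$ are $C^2$, $h$ is $C^1$ on $(0,\infty)$, and the chain rule together with the first half of~\eqref{eq:phi} gives
\[
h'(t) = \bigl\langle \nabla g(\phi(t)),\, \phi'(t)\bigr\rangle
      = \bigl\langle \phi'(t),\, \phi'(t)\bigr\rangle
      = \lVert \phi'(t)\rVert^2 \qquad\text{for all } t>0.
\]
By the second half of~\eqref{eq:phi}, $\phi'(t)\neq 0$ for every $t>0$, hence $h'(t)>0$ for every $t>0$. Therefore $h$ is strictly increasing on $(0,\infty)$; equivalently, $g\bigl(\phi(t_1)\bigr) < g\bigl(\phi(t_2)\bigr)$ whenever $0<t_1<t_2$.

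To include the base point $p$, I would use continuity of $g$ and $\lim_{t\to 0^+}\phi(t)=p$ to get $\lim_{t\to 0^+}h(t)=g(p)$. Fixing any $0<s<t$ and letting $s'\to 0^+$ in the inequality $h(s')\le h(s)$ (valid for $s'\le s$ by monotonicity) yields $g(p)\le h(s) < h(t)$, so $g(p) < g\bigl(\phi(t)\bigr)$ for all $t>0$ as well; hence $g$ strictly increases along $\phi$.

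There is essentially no real obstacle here. The only points needing a little care are that $\phi$ reaches $p$ only in the limit $t\to 0^+$ rather than at an actual value $t=0$, which is dealt with by the continuity argument above, and the observation that the prescribed direction $\widehat{\nabla g(p)}$ plays no role beyond pinning down which trajectory is meant: by continuity of $\nabla g$, any trajectory of $\nabla g$ through $p$ automatically satisfies $\lim_{t\to 0^+}\widehat{\phi'(t)}=\lim_{t\to 0^+}\widehat{\nabla g(\phi(t))}=\widehat{\nabla g(p)}$.
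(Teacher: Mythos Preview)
Your proof is correct and follows essentially the same route as the paper: differentiate $g\circ\phi$, substitute $\phi'=\nabla g\circ\phi$, and read off that the derivative equals $\lVert\phi'\rVert^2>0$. Your extra paragraph handling the limit $t\to 0^+$ to include the base point is a harmless refinement the paper omits.
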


\begin{proof} Let $p \in \RR^n$ with $\nabla g(p) \neq 0$. Let $\phi$ denote a trajectory of $\nabla g$ through $p$ using $\widehat{\nabla g(p)}$. Since $g$ is $C^2$, we know $\phi$ is $C^1$ and hence $g \circ \phi$ is $C^1$. We compute
\begin{equation}\label{eq:dRHS}
\frac{d}{dt} g\bigl(\phi(t)\bigr) = \left\langle \nabla g\bigl(\phi(t)\bigr), \phi'(t)\right\rangle=\left\langle \nabla g\bigl(\phi(t)\bigr), \nabla g\bigl(\phi(t)\bigr)\right\rangle = \left\lVert \nabla g\bigl(\phi(t)\bigr) \right\rVert^2.
\end{equation}
Since $p$ is not a critical point of $g$, $\left\lVert\nabla g\bigl(\phi(t)\bigr) \right\rVert^2 > 0$ for all $t > 0$. It follows from \eqref{eq:dRHS} that
\[
\frac{d}{dt} g\bigl(\phi(t)\bigr) > 0
\]
for all $t > 0$. Hence $g$ strictly increases along $\phi$.
\end{proof}

For the rest of the section we assume $g$ is a routing function and $D$ is a connected component of $\{g \neq 0\}$. First, we state some simple facts about $g$.

\begin{lemma} \label{lem:gbounded}
$g$ is a bounded over the reals.
\end{lemma}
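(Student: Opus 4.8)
The plan is to exploit the second defining property of a routing function --- that $g$ vanishes at infinity --- together with the continuity of $g$ on a compact ball. Concretely, I would first apply the vanishing-at-infinity condition with the fixed choice $\varepsilon = 1$: this yields a radius $\delta > 0$ such that $g(x) \le 1$ for every $x \in \RR^n$ with $\lVert x \rVert \ge \delta$. This disposes of all points outside a bounded region in one stroke.

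Next I would treat the closed ball $\overline{B} = \set{x \in \RR^n}{\lVert x \rVert \le \delta}$. Since $g$ is $C^2$ it is in particular continuous, and $\overline{B}$ is compact, so by the extreme value theorem $g$ attains a finite maximum $M := \max_{x \in \overline{B}} g(x)$ on $\overline{B}$. Combining the two cases, every $x \in \RR^n$ satisfies $g(x) \le \max(1, M)$, and together with the first defining property $g(x) \ge 0$ this gives $0 \le g(x) \le \max(1, M)$ for all $x$; that is, $g$ is bounded over the reals.

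The argument is entirely routine and there is no genuine obstacle. The only points worth a moment's care are that the ball must be taken \emph{closed} so as to be compact, and that continuity of $g$ --- hence the applicability of the extreme value theorem --- is supplied by the standing hypothesis that $g$ is $C^2$. Note that the ``bounded first and second derivatives'' clause in the definition of routing function plays no role in this particular lemma; it will be invoked later when controlling the behaviour of trajectories of $\nabla g$.
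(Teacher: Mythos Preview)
Your proof is correct and in fact cleaner than the paper's own argument. The paper proves the upper bound by contradiction: assuming $g$ is unbounded above, it extracts a sequence $\{x_k\}$ with $g(x_k) > k$, uses the vanishing-at-infinity property to trap a tail of this sequence inside a bounded superlevel set $S = \{x : g(x) \geq L\}$, invokes Bolzano--Weierstrass to pass to a convergent subsequence, and derives a contradiction from continuity of $g$ at the limit. Your approach bypasses the contradiction entirely by applying the vanishing-at-infinity condition once with $\varepsilon = 1$ to reduce the problem to the compact ball $\overline{B}$, then invoking the extreme value theorem directly. Both rely on the same two ingredients (property two of Definition~\ref{def:routing} and continuity on a compact set), but your route is shorter and more transparent; the paper's version gains nothing from the extra machinery.
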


\begin{proof}
The first property in Definition~\ref{def:routing} guarantees $g$ is bounded below by 0. Suppose for a contradiction that $g$ is not bounded above. Then for all $M$, there exists $x \in \RR^n$ such that $\lvert g(x) \rvert > M$. In particular, for every $k \in \NN$, there exists $x_k \in \RR^n$ for which $\lvert g(x_k) \rvert > k$. Fix such a sequence $\{x_k\}_{k=1}^\infty$. Certainly, $g(x_k) \geq 0$ for all $k$. Let
\begin{align*}
L &= \min_{k \in \NN} \set{g(x_k)}{g(x_k) > 0},\\
S &= \set{x \in \RR^n}{g(x) \geq L}.
\end{align*}
Let $k^*$ be the index such that $g(x_{k^*}) = L$. The second property in Definition~\ref{def:routing} guarantees $S$ is bounded by letting $\varepsilon = L > 0$. Since the tail $\{x_k\}_{k=k^*}^\infty$ is contained in $S$, the Bolzano-Weierstrass theorem implies there exists a subsequence $\{x_{k_j}\}_{j=1}^\infty$ which converges to some limit $M$. Since $g$ is continuous everywhere, 
\[
\lim_{j \to \infty} g(x_{k_j}) = g(M).
\]
In particular, the sequence $\bigl\{g(x_{k_j})\bigr\}_{j=1}^\infty$ is convergent, hence bounded. However, by construction, $\bigl \lvert g(x_{k_j}) \bigr \rvert > k_j \geq j$ for all $j \in \NN$, and hence this sequence is not bounded, a contradiction. Thus $g$ is bounded above. Therefore $g$ is bounded.
\end{proof}

\begin{example}
When we study the graph of $g$ from \eqref{eq:g} in Figure~\eqref{fig:plot3dgshaded}, we can see that $g$ is bounded over the reals between $[0, 2.185]$.
\begin{figure}[ht]
\centering
\subfloat[]{\raisebox{10mm}{\includegraphics{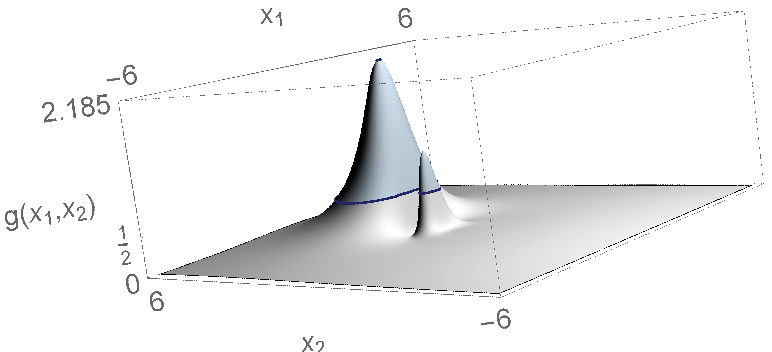}\label{fig:plot3dgshaded}}}\qquad
\subfloat[]{\includegraphics{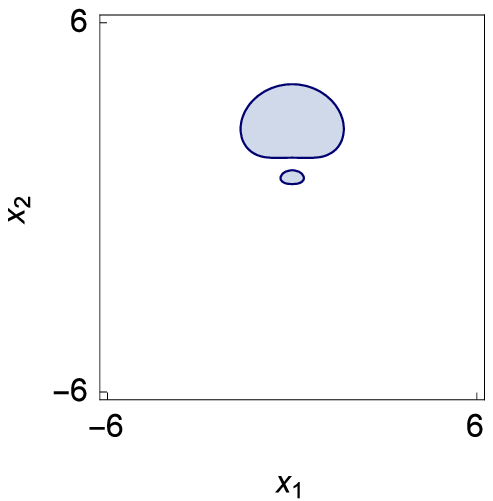}\label{fig:compact}}
\caption{}
\end{figure}
\end{example}

\begin{lemma} \label{lem:compactset}
For all $L > 0$, $\set{x \in D}{g(x) \geq L}$ is compact.
\end{lemma}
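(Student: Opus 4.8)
The plan is to show that $S_L := \set{x \in D}{g(x) \geq L}$ is both closed and bounded in $\mathbb{R}^n$, hence compact by Heine--Borel. Boundedness is the easier half: it follows immediately from the second property in Definition~\ref{def:routing} (vanishing at infinity). Applying that property with $\varepsilon = L > 0$ yields a $\delta > 0$ such that $\lVert x \rVert \geq \delta$ implies $g(x) \leq L$; contrapositively, every $x$ with $g(x) > L$ satisfies $\lVert x \rVert < \delta$, and since $g$ is continuous the closure condition $g(x) \geq L$ still forces $\lVert x \rVert \leq \delta$. So $S_L$ sits inside a closed ball of radius $\delta$.

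For closedness, I would argue that $S_L$ is the intersection of two closed sets. The set $\set{x \in \mathbb{R}^n}{g(x) \geq L}$ is closed because $g$ is continuous and $[L, \infty)$ is closed. The delicate point is that we are intersecting with $D$, which is a connected \emph{component} of the open set $\{g \neq 0\}$, hence $D$ itself is open, not closed. The key observation is that a point of $\overline{D}$ at which $g \geq L > 0$ cannot lie on the boundary of $D$: such a point lies in the open set $\{g \neq 0\}$, and $D$ is relatively closed in $\{g \neq 0\}$ (a connected component of a locally connected space is closed in that space). Therefore $\overline{D} \cap \set{x}{g(x) \geq L} \subseteq D$, which gives $S_L = \overline{D} \cap \set{x \in \mathbb{R}^n}{g(x) \geq L}$, an intersection of two closed sets, hence closed.

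Combining the two parts, $S_L$ is a closed and bounded subset of $\mathbb{R}^n$, therefore compact. The main obstacle, and the only part requiring care, is the closedness argument: one must not conflate ``$D$ is a connected component of an open set'' with ``$D$ is closed in $\mathbb{R}^n$.'' The resolution is that $D$ is closed relative to $\{g \neq 0\}$, and the condition $g(x) \geq L > 0$ keeps us safely away from $\{g = 0\}$, so relative closedness upgrades to genuine closedness on the sublevel-complement in question. Everything else is a routine application of continuity of $g$ and the defining properties of a routing function already established in Lemma~\ref{lem:gbounded} and Definition~\ref{def:routing}.
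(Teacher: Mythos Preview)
Your proof is correct and follows the same overall strategy as the paper: show that the global superlevel set $\{x\in\RR^n : g(x)\geq L\}$ is closed and bounded (hence compact) via continuity of $g$ and the vanishing-at-infinity property, then argue that intersecting with the component $D$ still gives a closed set.

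The only difference is in how that last step is handled. The paper invokes the semi-algebraic structure: it writes $S=g^{-1}([L,M])$ as a finite disjoint union of closed semi-algebraic connected components and identifies $K=\{x\in D:g(x)\geq L\}$ with one (or several) of them, hence closed in the compact set $S$. Your argument instead uses the purely topological fact that $D$, as a connected component of the locally connected open set $\{g\neq 0\}$, is relatively closed there, so $\overline{D}\cap\{g\geq L\}\subseteq D$ and $S_L=\overline{D}\cap\{g\geq L\}$ is an intersection of closed sets. Your route is slightly more elementary and does not need $g$ to be semi-algebraic, which is a small gain in generality since this section only assumes $g$ is a $C^2$ routing function; the paper's phrasing tacitly imports the semi-algebraic context from the algorithm. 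One cosmetic point: for boundedness it is cleaner to apply the vanishing-at-infinity property with some $\varepsilon<L$ (say $\varepsilon=L/2$) rather than $\varepsilon=L$, so that $g(x)\geq L$ directly forces $\lVert x\rVert<\delta$ without the extra closure remark; the paper makes the same choice $\varepsilon=L$, so this is not a discrepancy between your argument and theirs.
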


\begin{proof}
Let $L > 0$ and $K = \set{x \in D}{g(x) \geq L}$. Recall that $g$ is bounded (Lemma~\ref{lem:gbounded}), hence there exists $M$, such that for all $x \in \RR^n$, $\lvert g(x) \rvert \leq M$. The set 
\[
S = g^{-1}\bigl([L, M]\bigr) = \set{x \in \RR^n}{g(x) \geq L}
\]
is closed because it is the preimage of a closed set under a continuous function and bounded due to the second property of $g$ being a routing function (letting $\varepsilon = L > 0$). Therefore, $S$ is compact. The semi-algebraic set $S$ is a disjoint union of closed semi-algebraic connected components $K_1, \dotsc, K_\ell$ where $K_1 = K$ (without loss of generality). Since $K$ is a closed subset of the compact set $S$ then $K$ is compact.
\end{proof}

\begin{example} Consider the superlevel set $S = \left\{g \geq \frac{1}{2}
    \right\}$ of $g$ from \eqref{eq:g}. We draw $S$ as the shaded region in
    Figure~\eqref{fig:compact}. We see that $S$ consists of two connected
    components, each of which is compact. For reference, we also shaded the
    points on the graph of $g$ in Figure~\eqref{fig:plot3dgshaded} whose third coordinate is greater than or equal to $\frac{1}{2}$. 
\end{example}

We show next that the trajectories are unique assuming a certain condition.

\begin{lemma}  \label{lem:uniquephi}
Let $p \in D$. If $\nabla g(p) \neq 0$, then there exists a unique trajectory $\phi$ of $\nabla g$ through $p$ using $\widehat{\nabla g(p)}$
\end{lemma}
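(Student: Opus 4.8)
The plan is to prove existence and uniqueness of the trajectory through $p$ by invoking the classical existence-uniqueness theory for ODEs, but carefully reconciling the parametrization used in Definition~\ref{def:trajectory} (where $\phi' = \nabla g(\phi)$, a genuine autonomous system) with the limiting normalization conditions at $t \to 0^+$. Since $g$ is $C^2$, the vector field $x \mapsto \nabla g(x)$ is $C^1$ on $\RR^n$, hence locally Lipschitz, so the Picard–Lindelöf theorem applies: for the initial value problem $\psi'(t) = \nabla g(\psi(t))$, $\psi(0) = p$, there is a unique maximal solution $\psi$ on some interval $(-\alpha, \beta)$ with $\beta > 0$. Because $\nabla g(p) \neq 0$ and $\nabla g$ is continuous, after possibly shrinking $\beta$ we have $\nabla g(\psi(t)) \neq 0$ for all $t \in [0,\beta)$, so $\psi'(t) \neq 0$ there as well.

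Next I would verify that (a suitable restriction/reparametrization of) $\psi$ is in fact a trajectory of $\nabla g$ through $p$ using $\widehat{\nabla g(p)}$ in the sense of Definition~\ref{def:trajectory}, i.e. that it extends to a $C^2$ map on all of $\RR_+$ and satisfies the three limit conditions. The condition $\lim_{t \to 0^+}\phi(t) = p$ is immediate from $\psi(0) = p$ and continuity. For the direction condition, $\frac{\phi'(t)}{\lVert \phi'(t)\rVert} = \frac{\nabla g(\phi(t))}{\lVert \nabla g(\phi(t))\rVert} \to \frac{\nabla g(p)}{\lVert \nabla g(p)\rVert} = \widehat{\nabla g(p)}$ as $t \to 0^+$, again by continuity of $\nabla g$ and the fact that it is nonzero near $p$. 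The $C^2$ regularity of $\phi$ follows since $\phi' = \nabla g \circ \phi$ is $C^1$ (composition of $C^1$ maps), hence $\phi$ is $C^2$. That $\phi$ is defined on all of $(0,\infty)$ rather than a bounded interval can be argued using Lemma~\ref{lem:compactset} and Lemma~\ref{lem:trajincr}: along the trajectory $g$ is nondecreasing (strictly increasing while $\nabla g \neq 0$), so the trajectory stays inside a superlevel set $\{x \in D : g(x) \geq g(p)\}$, which is compact; a maximal solution of an ODE with bounded $C^1$ right-hand side that remains in a fixed compact set cannot blow up in finite time, so $\beta = \infty$.

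For uniqueness, I would argue by contradiction: suppose $\phi_1$ and $\phi_2$ are two trajectories of $\nabla g$ through $p$ using $\widehat{\nabla g(p)}$. Both are $C^1$ solutions of $\phi_i'(t) = \nabla g(\phi_i(t))$ on $(0,\infty)$ with $\lim_{t\to 0^+}\phi_i(t) = p$. The delicate point is that the ODE solution is pinned at $t=0$ only in the limit, not at an interior point, so I cannot immediately quote Picard–Lindelöf on $(0,\infty)$. To handle this, extend each $\phi_i$ continuously to $t=0$ by setting $\phi_i(0) = p$; since $\phi_i' = \nabla g \circ \phi_i$ extends continuously to $t=0$ with value $\nabla g(p)$, each extended $\phi_i$ is actually $C^1$ on $[0,\infty)$ and solves the same initial value problem $\psi' = \nabla g(\psi)$, $\psi(0) = p$. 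Now Picard–Lindelöf (uniqueness for locally Lipschitz right-hand sides, applicable since $\nabla g$ is $C^1$) forces $\phi_1 \equiv \phi_2$ on $[0,\infty)$.

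The main obstacle I anticipate is this reconciliation between the "limit at $0^+$" formulation of the trajectory and the "initial condition at $0$" formulation required to apply the standard ODE uniqueness theorem — one must be careful to show the one-sided limits of $\phi_i$ and $\phi_i'$ at $0$ genuinely promote each $\phi_i$ to a $C^1$ (indeed $C^2$) solution on the closed half-line, so that the classical theorem bites. A secondary, more routine concern is justifying that the solution does not escape to infinity in finite time, which is where the routing-function hypotheses (boundedness of $\nabla g$, plus the compact superlevel sets from Lemma~\ref{lem:compactset}) enter; I'd present that as a short escape-lemma / a priori bound argument rather than a computation.
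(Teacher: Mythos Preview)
Your proposal is correct and follows essentially the same route as the paper: apply the Picard--Lindel\"of theorem to the initial value problem $\psi' = \nabla g(\psi)$, $\psi(0) = p$, argue global forward existence via boundedness (the paper cites Lemma~\ref{lem:gbounded} and a result from Perko, whereas you use the compact superlevel sets of Lemma~\ref{lem:compactset}, which is arguably cleaner), and then verify the limit conditions of Definition~\ref{def:trajectory} by continuity. Your treatment of uniqueness---extending any trajectory to a $C^1$ solution on $[0,\infty)$ so that the classical uniqueness theorem applies---is more careful than the paper's, which simply invokes the existence--uniqueness theorem at $t=0$ without explicitly bridging the limit-at-$0^+$ formulation; one small point you should tighten is that $\phi'(t)\neq 0$ must hold for \emph{all} $t>0$, not just on a shrunk interval, and this follows because a solution hitting a critical point would have to be constant by ODE uniqueness.
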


\begin{proof} 
Let $p \in D$. The component $D$ is an open subset of $\RR^n$ containing $p$ and $g \in C^2(D)$. According to the Fundamental Existence-Uniqueness Theorem  \cite[Section 2.2, pp. 74]{Perko2001}, there exists $a > 0$ such that
\begin{equation}\label{eq:IVP}
\begin{split}
\phi'(t) &= \nabla g\bigl(\phi(t)\bigr)\\
\phi(0) &= p.
\end{split} 
\end{equation}
has a unique solution $\phi(t)$ on the interval $[-a, a]$. Let $[0, \beta)$ be the right maximal interval of existence of $\phi(t)$.

Because $g$ is bounded (Lemma~\ref{lem:gbounded}), the trajectory $\phi$ is bounded. It follows from  \cite[Theorem 3, Section 2.4, pp. 91]{Perko2001} that $\beta = \infty$. Certainly $\lim_{t \to 0^+} \phi(t) = p$ and 
\[
\lim_{t \to 0^+} \frac{\phi'(t)}{\lVert \phi'(t) \rVert} = \widehat{\nabla g(p)}.
\]
Hence $\phi$ is the trajectory of $\nabla g$ through $p$ using $\widehat{\nabla g(p)}$.
\end{proof}

\begin{remark}\label{rem:uniquephi} A similar argument to the one above shows that if $p \in D$ and $\nabla g(p) \neq 0$ then there exists a unique $C^2$ function $\phi \colon (-\infty, 0] \to \RR^n$ satisfying 
\begin{align*}
\phi'(t) &= -\nabla g\bigl(\phi(t)\bigr)\\
\phi(0) &= p.
\end{align*} 
Combined with the argument above, this means that there exists a unique $C^2$ function $\phi \colon \RR \to \RR^n$ satisfying \eqref{eq:IVP}. When $\nabla g(p) = 0$, $\phi = p$ is the unique solution to \eqref{eq:IVP}, which exists for all $t \in \RR$. We can conclude that the gradient vector field $\nabla g$ is complete.
\end{remark}

Next, we have the important observation that the destination of every steepest ascent path is a routing point of $g$. 

\begin{lemma}\label{lem:origdestD} 
Let $p \in D$ with $\nabla g(p) \neq 0$ and $\phi$ be the trajectory of $\nabla g$ through $p$ using $\widehat{\nabla g(p)}$. Then $\dest(\phi)$ exists and is a routing point of $g$ in $D$. 
\end{lemma}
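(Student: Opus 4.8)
The plan is to track the value of $g$ along the trajectory $\phi$ and exploit compactness of superlevel sets. First I would note that by Lemma~\ref{lem:uniquephi} the trajectory $\phi$ exists for all $t\in[0,\infty)$, and by Lemma~\ref{lem:trajincr} the map $t\mapsto g(\phi(t))$ is strictly increasing. Since $g$ is bounded above (Lemma~\ref{lem:gbounded}), the limit $\ell := \lim_{t\to\infty} g(\phi(t))$ exists and satisfies $\ell \ge g(\phi(1)) > 0$ (the value is positive because $g(\phi(t))$ strictly increased from $g(p)$, and in any case is $\ge g(p) > 0$ since $p\in D$ means $g(p)\neq 0$, hence $g(p)>0$). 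Consequently $\phi\bigl([1,\infty)\bigr)$ is contained in the superlevel set $K := \set{x\in D}{g(x)\ge \ell'}$ for any $0 < \ell' \le g(\phi(1))$, which is compact by Lemma~\ref{lem:compactset} — here one also needs that the trajectory stays inside $D$, which follows because $g>0$ along $\phi$ so $\phi(t)\in\{g\neq 0\}$, and $\phi$ is a continuous path starting in the connected component $D$.

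Next I would use compactness to produce a limit point and show it is a critical point. By compactness of $K$ there is a sequence $t_k\to\infty$ with $\phi(t_k)\to x^\ast \in K$. I claim $\nabla g(x^\ast) = 0$. Suppose not; then by continuity $\lVert \nabla g\rVert$ is bounded below by some $\eta>0$ on a neighborhood of $x^\ast$, and by the bound on the second derivatives of $g$ (fifth property of a routing function) the speed $\lVert \phi'\rVert = \lVert \nabla g(\phi)\rVert$ cannot vary too fast, so on a definite time interval around each $t_k$ (for $k$ large) one has $\tfrac{d}{dt}g(\phi(t)) = \lVert\nabla g(\phi(t))\rVert^2 \ge \eta^2/2$, say. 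This forces $g(\phi(t))$ to increase by a fixed positive amount on infinitely many disjoint time intervals, contradicting $g(\phi(t))\to\ell<\infty$. Hence $x^\ast$ is a critical point of $g$ with $g(x^\ast) = \ell > 0$, i.e. $x^\ast$ is a routing point, and since $x^\ast\in K\subseteq D$ it lies in $D$.

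Finally I would upgrade ``$\phi(t_k)\to x^\ast$ along a subsequence'' to ``$\lim_{t\to\infty}\phi(t) = x^\ast$'', i.e. that $\dest(\phi)$ actually exists. This is where I expect the main obstacle: a priori the trajectory could approach the whole set of routing points at level $\ell$ or wander among them. The key input is nondegeneracy (fourth property of a routing function): routing points are isolated, so $R$ is a finite set and there is a positive distance $2\rho$ separating distinct routing points. I would argue that once $\phi(t)$ enters the ball $B(x^\ast,\rho)$ it can never leave: near the nondegenerate critical point $x^\ast$ one has a local Lyapunov-type estimate (e.g. via the stable/unstable decomposition of the Hessian, or directly from $g(x^\ast) - g(x) \asymp \lVert x - x^\ast\rVert^2$ together with monotonicity of $g\circ\phi$ and the gradient inequality), which confines the forward trajectory and forces $\phi(t)\to x^\ast$. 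Combined with the fact that $\phi(t_k)\to x^\ast$ for some subsequence, this yields $\dest(\phi) = x^\ast\in R\cap D$, completing the proof. An alternative route to this last step, avoiding a hands-on stable-manifold estimate, is to invoke that $g\circ\phi$ is monotone with a finite limit while $\{$routing points at level $\ell\}$ is finite, hence the $\omega$-limit set of $\phi$ is a connected subset of this finite set and therefore a single point.
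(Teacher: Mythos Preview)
Your argument is correct but diverges from the paper's in two places. To show the accumulation point is critical, the paper does not argue by contradiction at $x^\ast$; instead it observes that
\[
\int_0^\infty \lVert \phi'(t)\rVert^2\,dt \;=\; \int_0^\infty \frac{d}{dt}\,g\bigl(\phi(t)\bigr)\,dt \;=\; \lim_{t\to\infty} g\bigl(\phi(t)\bigr) - g(p) \;<\; \infty
\]
and combines this with uniform Lipschitz continuity of $\phi'$ (coming from the bound on $\Hess g$) to conclude $\phi'(t)\to 0$ globally --- a Barbalat-type step that yields $\nabla g\bigl(\phi(t)\bigr)\to 0$ along the whole trajectory, not merely at subsequential limits. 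Your localized contradiction is equally valid. For upgrading subsequential to full convergence, the paper is terse, essentially asserting the limit once an accumulation point is found; your $\omega$-limit set argument (a connected subset of the finite set of routing points at level $\ell$ must be a single point) is the cleaner justification and uses only finiteness of $R$, not nondegeneracy. Either package works: the paper's integral computation buys the global decay $\phi'\to 0$ in one stroke, while your route makes the convergence step explicit.
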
 

\begin{proof}  
Let $p \in D$ with $\nabla g(p) \neq 0$ and $\phi$ be the trajectory of $\nabla g$ through $p$ using $\widehat{\nabla g(p)}$, whose existence is guaranteed by Lemma~\ref{lem:uniquephi}. Let $K = \set{x \in D}{g(x) \geq g(p)}$. Lemma~\ref{lem:compactset} implies $K$ is compact. Let $\{t_j\} \subset \RR_+$ be a sequence with $\lim_{j \to \infty} t_j = \infty$. Let $\left\{\witi{t}_j\right\}$ denote the tail of $\{t_j\}$ so that $\left\{\phi\left(\witi{t}_j\right)\right\} \subseteq K$ for all $j$. The sequence $\left\{\phi\left(\witi{t}_j\right)\right\}$ is an infinite set of points in a compact set, so it has an accumulation point $q$. Since $\phi$ is continuous (Definition~\ref{def:trajectory}), then $\dest(\phi) = \lim_{t\to \infty} \phi(t) = q$. 

First, we show $q$ is a critical point of $g$. It suffices to show $\nabla g\bigl(\phi(t)\bigr) \to 0$ as $t \to \infty$. Differentiating $\phi'(t)$ we find
\begin{equation}\label{eq:ddphi}
\begin{split}
\phi''(t) &=  \left(\nabla_{\frac{\partial}{\partial \phi}} \nabla g\bigl(\phi(t)\bigr)\right) \phi'(t)\\
&= \left(\nabla_{\frac{\partial}{\partial \phi}} \nabla g\bigl(\phi(t)\bigr)\right) \nabla g\bigl(\phi(t)\bigr)
\end{split}
\end{equation}
holds for all $t > 0$. The first and second derivatives of $g$ are bounded (property five of Definition~\ref{def:routing}), hence we may deduce from \eqref{eq:ddphi} that $\phi'$ is uniformly Lipschitz continuous for $t > 0$.

Since $g$ is continuous (Definition~\ref{def:routing}), $g_\infty := \lim_{t \to \infty} g\bigl(\phi(t)\bigr)$ exists and since $g$ is bounded (Lemma~\ref{lem:gbounded}), $g_\infty < \infty$. For $0 < t < \infty$
\[
g_\infty \geq g\bigl(\phi(t)\bigr) > g(p),
\]
so from \eqref{eq:dRHS}
\begin{equation}\label{eq:intineq}
\int_0^\infty \lVert \phi'(t) \rVert^2 \, dt = \int_{0}^\infty \frac{d}{dt} g \bigl(\phi(t)\bigr) \, dt = g_\infty - g(p) < \infty.
\end{equation}
Since $\phi'$ is uniformly Lipschitz continuous, \eqref{eq:intineq} implies
\[
\lim_{t \to \infty} \nabla g\bigl(\phi(t)\bigr) = \lim_{t \to \infty} \phi'(t) = 0
\]
as desired.


Finally, we show $q$ is a routing point in $D$. We find $g(q) > g(p) > 0$ because $g$ increases along $\phi$ as $t \to \infty$ (Lemma~\ref{lem:trajincr}). Hence, $q \in D$ is a routing point.
\end{proof}

We now show that the connected components of $\{g \neq 0\}$ can be decomposed in to a  disjoint union of stable manifolds.

\begin{lemma} \label{lem:disjunmorse} 
The component $D$ is a disjoint union of stable manifolds corresponding to the routing points contained in $D$; that is, 
\[
D = \coprod_{p \in R_D} W^s(p).
\]
where $R_D$ is the set of routing points of $g$ in $D$. 
\end{lemma}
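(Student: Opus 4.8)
The plan is to prove the two set-theoretic claims separately: first that $D = \bigcup_{p \in R_D} W^s(p)$, and then that the union is disjoint. For the covering direction, the inclusion $\bigsupseteq$ is essentially trivial once we check each $W^s(p)$ with $p \in R_D$ actually lies in $D$: a point $x \in W^s(p)$ with $x \neq p$ has $\dest(\phi_x) = p$, and $g$ increases along $\phi_x$ (Lemma~\ref{lem:trajincr}), so $g(x) < g(p) \neq 0$, hence $g(x)$... wait, this only gives $g(x) \le g(p)$, not $g(x) \neq 0$; the cleaner route is to note $x$ and $p$ lie on the same trajectory, trajectories stay within a single connected component of $\{g \neq 0\}$ (since $g(\phi_x(t)) > 0$ for all $t$ by monotonicity and $g(p) > 0$, together with continuity and connectedness of the trajectory's closure), so $x \in D$. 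For the inclusion $\bigsubseteq$, take any $x \in D$. If $\nabla g(x) = 0$ then $x$ is a routing point (since $g(x) \neq 0$), $x \in R_D$, and $x \in W^s(x)$. If $\nabla g(x) \neq 0$, then by Lemma~\ref{lem:origdestD} the trajectory $\phi_x$ of $\nabla g$ through $x$ using $\widehat{\nabla g(x)}$ has $\dest(\phi_x) = q$ a routing point in $D$, so $x \in W^s(q)$ with $q \in R_D$.

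For disjointness, suppose $x \in W^s(p) \cap W^s(p')$ for routing points $p, p' \in R_D$. If $x$ is a critical point, then by definition of the stable manifold $x = p$ and $x = p'$ (since a stable manifold $W^s(p)$ contains no critical point other than $p$ — any critical point $y \neq p$ has $\nabla g(y) = 0$ so the trajectory through $y$ is constant and cannot have destination $p$), so $p = p'$. If $x$ is not a critical point, then there is a unique trajectory $\phi_x$ of $\nabla g$ through $x$ using $\widehat{\nabla g(x)}$ by Lemma~\ref{lem:uniquephi}, and by definition of $W^s(p)$ and $W^s(p')$ we must have $\dest(\phi_x) = p$ and $\dest(\phi_x) = p'$; uniqueness of the limit forces $p = p'$. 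The key input making this clean is the uniqueness of trajectories (Lemma~\ref{lem:uniquephi}) and the fact that destinations of trajectories through non-critical points of $D$ are well-defined routing points of $D$ (Lemma~\ref{lem:origdestD}).

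I expect the main obstacle to be the careful verification that $W^s(p) \subseteq D$ for $p \in R_D$, i.e., that a whole trajectory and its endpoints remain inside a single connected component of $\{g \neq 0\}$. The argument is: a trajectory $\phi_x$ through a non-critical point together with its destination $p$ forms a set $\phi_x([0,\infty)) \cup \{p\}$ that is connected (continuous image of $[0,\infty]$), and $g$ is strictly increasing along it with $g \to g(p) > 0$, so $g \geq g(x) > 0$ throughout if $g(x) > 0$ — but we need $g(x) > 0$ to begin with. We get this because $x \in W^s(p)$ means, by construction in the statement being proved, that we are only considering the decomposition over $x$ already known to lie in $\{g \neq 0\}$; more carefully, the relevant direction only requires: for $x \in D$, $\dest(\phi_x)$ is a routing point \emph{in $D$}, which is exactly Lemma~\ref{lem:origdestD}, and for the reverse inclusion we restrict attention to $W^s(p) \cap D$, or equivalently observe that every point of $W^s(p)$ with $p \in D$ lies on a trajectory whose image has $g$-values in $(0, g(p)]$ hence is connected to $p$ within $\{g \neq 0\}$, placing it in $D$. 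Assembling these observations into a clean statement is the only delicate point; the rest is bookkeeping.
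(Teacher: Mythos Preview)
Your proposal is correct and follows essentially the same approach as the paper: the covering direction uses Lemma~\ref{lem:origdestD} to send each non-critical point of $D$ to a routing point in $D$, and disjointness follows from uniqueness of trajectories (Lemma~\ref{lem:uniquephi}). You are actually more careful than the paper, which silently omits the reverse inclusion $W^s(p) \subseteq D$; your argument for it can be streamlined by noting that $g \geq 0$ forces any point with $g = 0$ to be a critical point, so every $x \in W^s(p) \setminus \{p\}$ automatically has $g(x) > 0$, and then monotonicity along the trajectory plus connectedness of $\phi_x([0,\infty)) \cup \{p\}$ places $x$ in the component of $p$.
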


\begin{proof} 
Let $R_D$ be the set of routing points of $g$ in $D$. Let $q \in D$ be arbitrary. Certainly $q \in W^s(q)$, so we may assume $\nabla g(q) \neq 0$. Let $\phi$ denote the trajectory of $\nabla g$ through $q$ using $\widehat{\nabla g(q)}$, whose existence is guaranteed by Lemma~\ref{lem:uniquephi}. It follows from Lemma~\ref{lem:origdestD} that there exists a routing point $r \in R_D$ such that $\dest(\phi) = r$. Hence $q \in W^s(r)$. This shows $D$ is a union of stable manifolds. It is a disjoint union due to the uniqueness of $\phi$ (Lemma~\ref{lem:uniquephi}).
\end{proof}

Now that we have a decomposition, the next natural question to ask is whether we can determine the dimension of each stable manifold. The definition of a stable manifold relies on a critical point, so one may believe that the dimension relies on the index of the critical point. To see this, we use the Stable Manifold Theorem, a fundamental result in the field of dynamical systems.

\begin{lemma} \label{lem:stabmanidim} 
If $p \in D$ is a routing point of $g$ with index $k$, then $W^s(p)$ is a smooth $k$-dimensional manifold.  
\end{lemma}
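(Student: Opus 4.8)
The plan is to deduce the statement from the classical (local) Stable Manifold Theorem for a hyperbolic equilibrium of a complete vector field, and then globalize by saturating the local stable manifold under the flow. Write $\varphi\colon\RR\times\RR^n\to\RR^n$ for the flow of $\nabla g$; it is complete by Remark~\ref{rem:uniquephi}, so each $\varphi_t$ is a $C^1$ diffeomorphism of $\RR^n$. Since $p$ is a routing point it is an equilibrium of $\nabla g$, and since it is nondegenerate the linearization $D(\nabla g)(p)=(\Hess g)(p)$ is a nonsingular real symmetric matrix; hence all its eigenvalues are real and nonzero, $p$ is a hyperbolic equilibrium, and $\RR^n=E^s\oplus E^u$ where $E^s$ is the span of the eigenvectors with negative eigenvalue. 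By the definition of the index (the number of negative eigenvalues of the Hessian, which is the dimension of the stable subspace of the ascent flow), $\dim E^s=k$.

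First I would invoke the local Stable Manifold Theorem (e.g. \cite[Section 2.7]{Perko2001}): there is an open neighbourhood $N$ of $p$ and a $C^1$ embedded $k$-disk $S=W^s_{\mathrm{loc}}(p)\subseteq N$, tangent to $E^s$ at $p$, forward-invariant under $\varphi$, and equal to $\{x\in N : \varphi_t(x)\in N \text{ for all } t\ge0 \text{ and } \varphi_t(x)\to p\}$. Next I would establish the identity $W^s(p)=\bigcup_{t\ge0}\varphi_{-t}(S)$. The preliminary point here is that $W^s(p)\subseteq D$ and that, for $y\in W^s(p)$ with $y\neq p$, the trajectory $\phi_y$ of $\nabla g$ through $y$ using $\widehat{\nabla g(y)}$ is precisely $s\mapsto\varphi_s(y)$ (and is the unique such trajectory by Lemma~\ref{lem:uniquephi} once $y\in D$ is known). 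To see this: $\nabla g(y)\neq0$ forces $g(y)>0$, since $g\ge0$ makes $\nabla g$ vanish wherever $g=0$; if $\nabla g(\varphi_{s_0}(y))=0$ for some $s_0$, uniqueness of solutions of \eqref{eq:IVP} would force $y$ to equal that equilibrium, a contradiction, so $\varphi_\bullet(y)$ meets the nonvanishing requirement of a trajectory; and since $g$ strictly increases along it (Lemma~\ref{lem:trajincr}) the whole orbit lies in $\{g>0\}$ and is connected to $p\in D$, hence lies in $D$. Then $y\in W^s(p)$ translates to $\varphi_s(y)\to p$, which by the characterization of $S$ is equivalent to $\varphi_s(y)\in S$ for all large $s$, i.e. $y\in\varphi_{-s}(S)$ for some $s\ge0$; adding $p\in S$ gives the equality.

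Finally, because $S$ is forward-invariant we have $\varphi_{-t}(S)\subseteq\varphi_{-t'}(S)$ whenever $0\le t\le t'$, and $\varphi_{-t}(S)$ is open in $\varphi_{-t'}(S)$ (invariance of domain: $\varphi_{t'-t}$ maps $S$ diffeomorphically onto a subset of $S$ of the same dimension). Hence $W^s(p)$ is an increasing union of $C^1$ embedded $k$-submanifolds along open inclusions and therefore inherits a canonical $C^1$ $k$-manifold structure; in fact it is diffeomorphic to $\RR^k$.

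I expect the main obstacle to be the bookkeeping in the middle step: carefully matching the paper's reparametrization-free notion of ``trajectory through $y$ using $\widehat{\nabla g(y)}$'' with the flow line $\varphi_\bullet(y)$, ruling out that $\nabla g$ vanishes along it in positive time, and confirming $W^s(p)\subseteq D$ so that Lemma~\ref{lem:uniquephi} is applicable. A secondary point is to be precise about regularity (``smooth'' here means $C^1$, since $\nabla g$ is only $C^1$) and about which flavour of submanifold one may legitimately claim; I would state the conclusion as an injectively immersed $C^1$ submanifold diffeomorphic to $\RR^k$, which is genuinely embedded under the routing-function hypotheses (using boundedness of $g$ and compactness of the superlevel sets of $g$ in $D$, Lemmas~\ref{lem:gbounded} and~\ref{lem:compactset}).
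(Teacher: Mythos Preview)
Your proof is correct and takes a genuinely different route from the paper's. The paper does not argue from scratch: it simply invokes the Stable Manifold Theorem as stated for Morse functions on \emph{compact} Riemannian manifolds in \cite[Theorem~4.2]{Banyaga2004}, and then checks that each of the three places where compactness is used in that source can be replaced, in our setting, by the routing-function axioms together with the lemmas already proved (finiteness of critical points, completeness of the flow via Remark~\ref{rem:uniquephi}, and Lemma~\ref{lem:origdestD}). In other words, the paper's proof is a hypothesis-verification wrapper around a black-box citation.

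Your approach is more self-contained: you start from the \emph{local} stable manifold theorem for a hyperbolic equilibrium (Perko), explicitly identify $W^s(p)$ with the backward saturation $\bigcup_{t\ge 0}\varphi_{-t}(S)$ of the local disk, and assemble the global manifold as an increasing union of diffeomorphic copies of $S$ along open inclusions. This buys you an explicit description of $W^s(p)$ (and the diffeomorphism with $\RR^k$), and it makes transparent exactly which analytic facts are needed---you even isolate the point that only $C^1$ regularity is available in the abstract $C^2$ setting, which the paper glosses over with the word ``smooth.'' The paper's version is shorter and leans on the Morse-theory literature; yours is longer but would survive in a dynamical-systems paper with no Morse-theory references at all. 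Your careful verification that $W^s(p)\subseteq D$ and that the flow line through $y$ really coincides with the paper's ``trajectory through $y$ using $\widehat{\nabla g(y)}$'' fills in bookkeeping that the paper's citation leaves implicit.
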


\begin{proof}
Let $p$ be a routing point of index $k$ of $g$ contained in $D$. The result in \cite[Theorem 4.2, Section 4.1, pp. 94]{Banyaga2004} has the same conclusion but the assumptions are that $g$ is a Morse function defined on a finite dimensional compact smooth Riemannian manifold. The function $g$ restricted to $D$ is Morse because $g$ is a routing function. The connected component $D$ of $\{g \neq 0\}$ is a finite dimensional smooth Riemannian manifold, but it is not compact. The compactness assumption is used in several spots throughout the proof of the cited theorem.
\begin{glists}{1em}{1.3em}{0.4em}{0em}
\itemr{(1)} There exist finitely many critical points of $g$ on the given manifold \cite[Corollary 3.3, Section 3.1, pp. 47]{Banyaga2004}.
\itemr{(2)} The gradient vector field $\nabla g$ generates a unique 1-parameter group of diffeomorphisms defined on $\RR \times D$ \cite[Section 4.1, pp. 94]{Banyaga2004}. 
\itemr{(3)} The destination of a trajectory is a critical point \cite[Corollary 3.19, Section 3.2, pp. 59]{Banyaga2004}.
\end{glists}
All of these issues can be addressed though.
\begin{glists}{1em}{1.3em}{0.4em}{0em}
\itemr{(1)} The manifold $D$ contains finitely many routing points because $g$ is a routing function. 
\itemr{(2)} This follows from the fact that the gradient vector field $\nabla g$ is complete (Remark~\ref{rem:uniquephi}).
\itemr{(3)} This is exactly Lemma~\ref{lem:origdestD}.
\end{glists}
\end{proof}

We expect all the routing points in a connected component to be connected via steepest ascent paths, so we expect each component to have a ``peak'' to ascend to; that is, we expect each component to have a local maximum. The simple observation follows from the routing function properties.

\begin{lemma}\label{lem:maxinD} 
The component $D$ contains a routing point of $g$ having index $n$. 
\end{lemma}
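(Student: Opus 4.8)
The plan is to exploit the facts that $g\ge 0$, that $g$ is not identically zero (since $D$ is nonempty and $g\neq 0$ on $D$), that $g$ vanishes at infinity, and that $g$ is $C^2$ with finitely many critical points where $g$ is nonzero. Concretely, the first step is to pick a point $x_0\in D$ and set $L:=g(x_0)/2>0$. By Lemma~\ref{lem:compactset} the superlevel set $K:=\set{x\in D}{g(x)\ge L}$ is a nonempty compact set. Since $g$ restricted to the compact set $K$ is continuous, it attains a maximum on $K$ at some point $p^\star\in K$; because $g(x_0)>L$, this maximum value is $>L$, so in particular $p^\star$ lies in the interior of $K$ relative to $D$, hence $p^\star$ is a local maximum of $g$ on the open set $D$ (any point of $D$ close to $p^\star$ either lies in $K$, where $g\le g(p^\star)$, or has $g$-value $<L<g(p^\star)$).

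The second step is to upgrade "local maximum" to "routing point of index $n$." Since $p^\star$ is a local max in the open set $D\subseteq\RR^n$ and $g$ is $C^2$, we have $\nabla g(p^\star)=0$, so $p^\star$ is a critical point; and $g(p^\star)>L>0$, so $p^\star$ is a routing point of $g$ lying in $D$. Because $g$ is a routing function, $p^\star$ is nondegenerate, i.e.\ $(\Hess g)(p^\star)$ is nonsingular. A nondegenerate critical point that is a local maximum has negative-definite Hessian (if some eigenvalue were positive, moving along the corresponding eigenvector would strictly increase $g$, or one can quote the standard second-order Taylor expansion); hence all $n$ eigenvalues of $(\Hess g)(p^\star)$ are negative, so the index of $p^\star$ is $n$. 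This gives the claim.

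One subtlety worth spelling out: I should make sure $D$ is nonempty so that a valid $x_0$ exists — but $D$ is by hypothesis a connected component of $\{g\neq 0\}$, hence nonempty, and every point of $D$ has $g>0$ by the first routing-function property combined with $g\neq 0$. The only real content is the compactness argument and the passage from a constrained maximum over $K$ to an unconstrained local maximum over $D$; the rest is the elementary second-derivative test. I do not expect a genuine obstacle here — the main thing to get right is the verification that the maximizer over $K$ is not on the "boundary level" $\{g=L\}$, which is handled by choosing $L$ strictly below $g(x_0)$ so that the max value strictly exceeds $L$.

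\begin{proof}
Since $D$ is a connected component of $\{g\neq 0\}$ it is nonempty; pick $x_0\in D$. By the first property of a routing function $g(x_0)\ge 0$, and since $x_0\notin\{g=0\}$ we have $g(x_0)>0$. Set $L:=\tfrac12 g(x_0)>0$ and $K:=\set{x\in D}{g(x)\ge L}$. Then $x_0\in K$, so $K\neq\emptyset$, and $K$ is compact by Lemma~\ref{lem:compactset}. As $g$ is continuous, it attains a maximum on $K$ at some point $p\in K$, with
\[
g(p)=\max_{x\in K} g(x)\ge g(x_0)>L.
\]
We claim $p$ is a local maximum of $g$ on the open set $D$. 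Let $x\in D$ be close enough to $p$ that $x\in D$ (possible since $D$ is open). If $g(x)\ge L$ then $x\in K$, so $g(x)\le g(p)$. If $g(x)<L$ then $g(x)<L<g(p)$. In either case $g(x)\le g(p)$, so $p$ is a local maximum of $g$ in $D$.

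Since $g\in C^2$ and $p$ is a local maximum in the open set $D\subseteq\RR^n$, we have $\nabla g(p)=0$; thus $p$ is a critical point of $g$, and $g(p)>L>0$ shows $p$ is a routing point of $g$ lying in $D$. Because $g$ is a routing function, $(\Hess g)(p)$ is nonsingular. The second-order Taylor expansion of $g$ at $p$ together with the fact that $p$ is a local maximum forces $(\Hess g)(p)$ to be negative semidefinite; being also nonsingular, it is negative definite, so all $n$ of its eigenvalues are negative. Hence the index of $p$ is $n$, and $D$ contains a routing point of $g$ having index $n$.
\end{proof}
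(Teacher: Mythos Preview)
Your proof is correct and follows essentially the same approach as the paper: use Lemma~\ref{lem:compactset} to get a compact superlevel set, take the maximum of $g$ there, argue it is a local maximum in the open set $D$, and then use nondegeneracy of routing points to conclude the Hessian is negative definite and the index is $n$. Your choice of $L=\tfrac12 g(x_0)$ rather than $L=g(x_0)$ is a mild improvement over the paper's version, since it guarantees $g(p)>L$ and thereby sidesteps the paper's case split on whether the interior of $K$ is empty.
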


\begin{proof} 
Take $x_0 \in D$. Then $g(x_0) > 0$. Let $K = \set{x \in D}{g(x) \geq g(x_0)}$. The set $K$ is compact (Lemma~\ref{lem:compactset}), hence $g$ has a maximum $z$ on $K$. The maximum must occur on the interior of $K$. 

If the interior is non-empty, then there exists an open ball $B$ around $z$ such that $g(z) \geq g(x)$ for all $x \in B$. Hence $z$ is a local maximum of $g$. Since $z$ is a critical point of $g$ where $g(z) > 0$, $z$ is a routing point of $g$. According to property four of Definition~\ref{def:routing}, $z$ is nondegenerate, so $(\Hess g)(z)$ is non-singular and has $n$ negative eigenvalues since $z$ is a local maximum. Hence $z$ is a routing point having index $n$ \cite[Section 1.3]{Banyaga2004}. 

If the interior of $K$ is empty, choose $x_1 \in D$ such that $g(x_1) < g(x_0)$, which is possible due to the second property Definition~\ref{def:routing}. Let $\witi{K} = \set{x \in D}{g(x) \geq g(x_1)}$. Again, the set $\witi{K}$ is compact so $g$ has a maximum $\witi{z}$ on $K$. The interior of $\witi{K}$ is non-empty, so as argued before, $\witi{z}$ is a local maximum of $g$; that is $\witi{z}$ is a routing point having index $n$.
\end{proof}

Throughout this section we will use the notation $\partial W$ to denote the boundary of a stable manifold $W$.

\begin{lemma} \label{lem:nominminmaxmax} 
If $p$ is a routing point of $g$ of index $n$, then $\partial W^s(p)$ contains no routing points of index $n$. 
\end{lemma}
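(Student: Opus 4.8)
The plan is to argue by contradiction. Suppose $p$ is a routing point of index $n$ (so $p$ is a local maximum of $g$, with $(\Hess g)(p)$ having $n$ negative eigenvalues), and suppose $q \in \partial W^s(p)$ is also a routing point of index $n$. By Lemma~\ref{lem:stabmanidim}, $W^s(q)$ is a smooth $n$-dimensional manifold, hence it is an open subset of $\RR^n$ (invariance of domain), and in particular it is open in $D$. On the other hand $W^s(p)$ is also $n$-dimensional, hence open. But $q \in \partial W^s(p)$ means every neighborhood of $q$ meets $W^s(p)$, and since $W^s(q)$ is an open neighborhood of $q$, we get $W^s(q) \cap W^s(p) \neq \emptyset$. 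This contradicts the disjointness of the stable-manifold decomposition in Lemma~\ref{lem:disjunmorse}, provided $p \neq q$; and $p \neq q$ holds because a point is never in the boundary of its own stable manifold ($W^s(p)$ is open, so $\partial W^s(p) \cap W^s(p) = \emptyset$, while $p \in W^s(p)$).

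To make this airtight I would first record the small observation that for a routing point $z$ of index $n$, the stable manifold $W^s(z)$ is an \emph{open} subset of $\RR^n$: by Lemma~\ref{lem:stabmanidim} it is a smooth $n$-dimensional submanifold of the $n$-dimensional manifold $D$ (which is open in $\RR^n$), so by invariance of domain it is open in $D$, hence open in $\RR^n$. Alternatively one can see openness directly: near a nondegenerate local maximum $z$, the Stable Manifold Theorem gives a neighborhood $B$ of $z$ such that every trajectory started in $B$ converges to $z$ (the local stable manifold is full-dimensional), so $B \subseteq W^s(z)$, and then forward-completeness of $\nabla g$ (Remark~\ref{rem:uniquephi}) together with continuous dependence on initial conditions propagates this openness to all of $W^s(z)$ — any point flowing into $z$ has a neighborhood whose trajectories also flow into $B$ and hence into $z$.

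With that in hand the contradiction is immediate: $W^s(p)$ and $W^s(q)$ are both open, they are disjoint by Lemma~\ref{lem:disjunmorse}, yet $q \in \partial W^s(p)$ forces the open set $W^s(q) \ni q$ to intersect $W^s(p)$.

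The main obstacle I anticipate is justifying the openness of $W^s(z)$ cleanly — in particular, making sure the invariance-of-domain / Stable Manifold Theorem argument is stated at the right level of generality for the noncompact manifold $D$, exactly as was done (for the manifold structure) in the proof of Lemma~\ref{lem:stabmanidim}. Once openness is established the rest is a one-line topological argument, so essentially all the work is in that lemma-level observation about index-$n$ stable manifolds being open.
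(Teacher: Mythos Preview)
Your proof is correct and takes a somewhat different route from the paper's. The paper argues directly from the fact that an index-$n$ routing point is a local maximum of $g$: assuming $q\in\partial W^s(p)$ has index $n$, it asserts that any neighborhood $U$ of $q$ must contain a point $y\in W^s(p)$ with $g(y)>g(q)$, contradicting local maximality. You instead pass through Lemma~\ref{lem:stabmanidim} to conclude that $W^s(q)$ is an $n$-dimensional submanifold of the $n$-manifold $D$, hence open (invariance of domain, or the local stable-manifold picture you sketch), and then invoke the disjoint decomposition of Lemma~\ref{lem:disjunmorse}: the open neighborhood $W^s(q)\ni q$ would have to meet $W^s(p)$, which is impossible for $p\neq q$.

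Your route is slightly longer but has the virtue of reducing the lemma entirely to structural facts already proved in the paper (dimension of stable manifolds plus disjointness), whereas the paper's one-line version leans on the analytic characterization of index $n$ as ``local maximum'' and leaves to the reader why the nearby point of $W^s(p)$ can be chosen with $g$-value exceeding $g(q)$. Both arguments ultimately exploit the same phenomenon---that an index-$n$ critical point attracts an entire open neighborhood under the gradient flow---but you make that step explicit via openness of $W^s(q)$ rather than via the scalar inequality $g(y)>g(q)$.
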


\begin{proof} 
Let $p$ be a routing point of $g$ of index $n$. Assume for a contradiction that $\partial W^s(p)$ contains a routing point $q$ of index $n$. Hence $q$ is a local maximum of $g$. Any neighborhood $U$ of $q$ must contain a point $y \in W^s(p)$ where $g(y) > g(q)$, contradicting the fact that $q$ is a local maximum. Hence, $\partial W^s(p)$ contains no routing points of index $n$.  
\end{proof}

\begin{lemma}\label{lem:stayinboundary} 
Let $r \in D$ be a routing point of $g$ of index $n$. Let $p \in \partial W^s(r) \cap D$ with $\nabla g(p) \neq 0$ and $\phi$ be the trajectory of $\nabla g$ through $p$ using $\widehat{\nabla g(p)}$. Then there exists a routing point $q \in \partial W^s(r) \cap D$ such that $\dest(\phi) = q$. 
\end{lemma}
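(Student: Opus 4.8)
The plan is to track the trajectory $\phi$ and show that it both (a) converges to a routing point $q$ in $D$, and (b) that $q$ must lie on $\partial W^s(r)$. For (a), the key observation is that $p \in \partial W^s(r) \cap D$ and $D$ is open, so a whole neighborhood of $p$ lies in $D$; moreover $g(p) > 0$. I would first argue that the whole forward trajectory $\phi$ stays in $D$: indeed $g$ is strictly increasing along $\phi$ by Lemma~\ref{lem:trajincr}, so $\phi(t)$ remains in the compact set $K = \set{x \in D}{g(x) \geq g(p)}$ for all $t > 0$, which by Lemma~\ref{lem:compactset} is a compact subset of $D$. Then Lemma~\ref{lem:origdestD} applies directly: $\dest(\phi)$ exists and is a routing point $q$ of $g$ lying in $D$. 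So the only real content is showing $q \in \partial W^s(r)$.

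For that, I would use continuous dependence of solutions of the ODE \eqref{eq:IVP} on initial conditions, together with the fact (Remark~\ref{rem:uniquephi}) that $\nabla g$ is complete, so the time-$t$ flow map $\Phi_t$ is a well-defined diffeomorphism of $\RR^n$ for every $t$. Since $p \in \partial W^s(r)$, there is a sequence $p_k \to p$ with $p_k \in W^s(r)$, i.e. the forward trajectory through each $p_k$ converges to $r$. Fix a small ball $B$ around $r$ contained in the basin $W^s(r)$ (such a ball exists because $r$ is a nondegenerate maximum, hence $W^s(r)$ is open and $n$-dimensional by Lemma~\ref{lem:stabmanidim}, and contains a neighborhood of $r$ by the Stable Manifold Theorem). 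For each $k$ there is a time $T_k$ with $\Phi_{T_k}(p_k) \in B$; the difficulty is controlling whether the times $T_k$ stay bounded. If along a subsequence $T_k$ stays bounded, $T_k \to T$, then by continuity $\Phi_T(p) = \lim \Phi_{T_k}(p_k) \in \overline{B} \subseteq W^s(r)$ (after shrinking $B$ so that $\overline B \subset W^s(r)$), forcing $p \in W^s(r)$; but $W^s(r)$ is open and $p \in \partial W^s(r)$ would then give $p \notin W^s(r)$ only if $\partial$ means topological boundary disjoint from the set — I need to be careful here, and the clean way is to instead argue by contradiction: if $\dest(\phi) = q \in W^s(r)$ then the forward trajectory from $q$ converges to $r$, so concatenating, the forward trajectory from $p$ also converges to $r$, i.e. $p \in W^s(r)$; but $W^s(r)$ is open (Lemma~\ref{lem:stabmanidim} with index $n$), so a neighborhood of $p$ lies in $W^s(r)$, contradicting $p \in \partial W^s(r)$. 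Symmetrically one rules out $q$ being in any other stable manifold $W^s(r')$ with $r'$ of index $n$ — wait, that is automatic; what I actually need is just $q \in \partial W^s(r)$, not $q \notin W^s(r')$.

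So the cleaner structure is: $q = \dest(\phi)$ is a routing point in $D$ (from step (a)). It cannot lie in the open set $W^s(r)$, by the contradiction argument above using that $W^s(r)$ is open and $\nabla g$ is complete (forward flow from $q \to r$ composed with $p \to q$ gives $p \to r$, contradicting $p \in \partial W^s(r)$ which is disjoint from the open set $W^s(r)$). To conclude $q \in \partial W^s(r)$ rather than merely $q \notin W^s(r)$, I would show $q \in \overline{W^s(r)}$: take the sequence $p_k \in W^s(r)$ with $p_k \to p$; apply the flow map $\Phi_{t_0}$ for a fixed small $t_0 > 0$, so $\Phi_{t_0}(p_k) \in W^s(r)$ (stable manifolds are flow-invariant) and $\Phi_{t_0}(p_k) \to \Phi_{t_0}(p) = \phi(t_0)$; hence $\phi(t_0) \in \overline{W^s(r)}$, and letting $t_0 \to \infty$, since $\phi(t_0) \to q$ and $\overline{W^s(r)}$ is closed, $q \in \overline{W^s(r)}$. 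Combined with $q \notin W^s(r)$, this yields $q \in \partial W^s(r)$. Since we already showed $q \in D$, we get $q \in \partial W^s(r) \cap D$, completing the proof.

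The main obstacle I anticipate is making the flow-continuity argument airtight: one must be sure the time-$t_0$ flow $\Phi_{t_0}$ is globally defined and continuous (handled by Remark~\ref{rem:uniquephi}), and that stable manifolds are invariant under the forward flow (immediate from uniqueness of trajectories in Lemma~\ref{lem:uniquephi} and the definition of $\dest$). The subtle point about whether $\dest$ of the trajectory through $\phi(t_0)$ equals $\dest(\phi)$ — i.e. that reparametrizing the trajectory by a time shift does not change its destination — is what legitimizes "$\phi(t_0) \in W^s(r) \Rightarrow p \in W^s(r)$," and this is exactly the invariance statement; I would state it as a one-line remark rather than a separate lemma.
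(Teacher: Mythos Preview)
Your argument is correct and follows essentially the same route as the paper's: both obtain the routing point $q \in D$ from Lemma~\ref{lem:origdestD} and then argue $q \in \partial W^s(r)$ via continuity of the flow together with the stable-manifold decomposition of $D$. The paper compresses the boundary step into a single sentence (``since $\phi$ is continuous and $D$ is a disjoint union of stable manifolds, $q \in \partial W^s(r)$''), whereas you unpack it properly using flow invariance of $W^s(r)$ and continuous dependence on initial data to get $\phi(t) \in \overline{W^s(r)}$ for all $t$, hence $q \in \overline{W^s(r)}$, and then openness of $W^s(r)$ to exclude $q \in W^s(r)$.

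One small wording fix: in the step ruling out $q \in W^s(r)$ you write ``the forward trajectory from $q$ converges to $r$, so concatenating\ldots,'' but $q$ is a critical point, so its forward trajectory is constant. The clean statement is that the only critical point in $W^s(r)$ is $r$ itself, so $q \in W^s(r)$ forces $q = r$, whence $\dest(\phi) = r$ and $p \in W^s(r)$ directly --- contradicting $p \in \partial W^s(r)$ since $W^s(r)$ is open. You reach the right conclusion regardless, so this is cosmetic.
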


\begin{proof} 
Let $r \in D$ be a routing point of $g$ of index $n$. Let $p \in \partial W^s(r) \cap D$ with $\nabla g(p) \neq 0$ and $\phi$ be the trajectory of $\nabla g$ through $p$ using $\widehat{\nabla g(p)}$, whose existence is guaranteed by Lemma~\ref{lem:uniquephi}. According to Lemma~\ref{lem:origdestD}, there exists a routing point $q \in D$ such that $\dest(\phi) = q$. Hence $p \in W^s(q)$. In fact, all the points along $\SA\left(g,p ,\widehat{\nabla g(p)}\right)$ (see Definition~\ref{def:trajectory}) are in $W^s(q)$. Since $\phi$ is continuous and $D$ is a disjoint union of stable manifolds (Lemma~\ref{lem:disjunmorse}), we find that $q \in \partial W^s(r)$. Thus $q \in \partial W^s(r) \cap D$ as desired.
\end{proof}

\begin{lemma}\label{lem:routstayinboundary} 
Let $r \in D$ be a routing point of $g$ of index $n$. Let $p \in \partial W^s(r) \cap D$ be a routing point of $g$ of index strictly less than $n$. If $v$ is an outgoing eigenvector of $(\Hess g)(p)$ tangent to $\partial W^s(r)$, then there exists a routing point $q \in \partial W^s(r) \cap D$ that is reachable from $p$ using $v$.  
\end{lemma}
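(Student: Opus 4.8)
The plan is to construct a trajectory $\phi$ of $\nabla g$ emanating from $p$ in the direction $v$ whose image stays inside $\partial W^s(r)$, and to show that $\phi$ has a destination $q$ which is a routing point of $g$ lying in $D$; this $q$ is then the desired routing point in $\partial W^s(r)\cap D$ reachable from $p$ using $v$. Two facts about $W^s(r)$ drive the argument. First, $W^s(r)$ is open, being the basin of attraction of the nondegenerate local maximum $r$ (equivalently, by Lemma~\ref{lem:stabmanidim} and invariance of domain), so $\partial W^s(r) = \overline{W^s(r)}\setminus W^s(r)$. Second, $W^s(r)$ is invariant under the complete (Remark~\ref{rem:uniquephi}) gradient-ascent flow of $\nabla g$: a time shift does not change a trajectory's destination, and trajectories are two-sided and unique (Lemma~\ref{lem:uniquephi}, Remark~\ref{rem:uniquephi}), so every integral curve through a point of $W^s(r)$ has destination $r$; hence $\overline{W^s(r)}$, and therefore the closed set $\partial W^s(r)$, is invariant.

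Next I would exploit the tangency hypothesis near $p$. Since $v$ is an outgoing eigenvector of $(\Hess g)(p)$, the point $p$ is a hyperbolic fixed point of $\nabla g$ with $v$ in its unstable eigenspace, and near $p$ the invariant set $\partial W^s(r)$ is a $C^1$ submanifold with $v\in T_p\bigl(\partial W^s(r)\bigr)$. Applying the unstable manifold theorem to the restriction of $\nabla g$ to this submanifold yields a trajectory $\phi$ of $\nabla g$ through $p$ using $v$ (in the sense of Definition~\ref{def:trajectory}; compare the trajectory through a critical point in Example~\ref{ex:traj}) whose image lies in $\partial W^s(r)$ for small parameter values. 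By the flow-invariance of $\partial W^s(r)$, the whole image $\phi\bigl((0,\infty)\bigr)$ then lies in $\partial W^s(r)$.

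It remains to see that $\dest(\phi)$ exists and is a routing point of $g$ in $D$, and this is the argument already used to prove Lemma~\ref{lem:origdestD}. Indeed $\phi'(t) = \nabla g\bigl(\phi(t)\bigr)\neq 0$ for $t>0$, so by \eqref{eq:dRHS} $g$ is strictly increasing along $\phi$ and $g\bigl(\phi(t)\bigr) > \lim_{s\to 0^+} g\bigl(\phi(s)\bigr) = g(p) > 0$ for every $t>0$; since $g$ is bounded (Lemma~\ref{lem:gbounded}), $g_\infty := \lim_{t\to\infty} g\bigl(\phi(t)\bigr)$ is finite, and $\set{x\in D}{g(x)\ge g\bigl(\phi(t_0)\bigr)}$ is compact for a fixed $t_0>0$ (Lemma~\ref{lem:compactset}), so the tail of $\phi$ is confined to a compact subset of $D$. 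Boundedness of the first two derivatives of $g$ makes $\phi'$ uniformly Lipschitz, and $\int_0^\infty \lVert\phi'(t)\rVert^2\,dt = g_\infty - g(p) < \infty$ then forces $\nabla g\bigl(\phi(t)\bigr) = \phi'(t) \to 0$; hence $\phi$ converges to a critical point $q = \dest(\phi)$ with $g(q) = g_\infty > g(p) > 0$, so $q$ is a routing point of $g$ in $D$. As $\partial W^s(r)$ is closed and contains $\phi\bigl((0,\infty)\bigr)$, it also contains $q = \lim_{t\to\infty}\phi(t)$, so $q\in\partial W^s(r)\cap D$ and $q$ is reachable from $p$ using $v$.

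The main obstacle is the second paragraph: turning the phrase ``$v$ tangent to $\partial W^s(r)$'' into the usable statement that the $v$-trajectory actually stays in $\partial W^s(r)$. This requires knowing the local structure of $\partial W^s(r)$ near the lower-index critical point $p$ — essentially the normal form of the Morse--Smale complex near a cell in the boundary of the top-dimensional cell $W^s(r)$ — so that $\nabla g$ restricts to a $C^1$ field on $\partial W^s(r)$ near $p$ having $\phi$ as its unstable trajectory tangent to $v$; the inclination ($\lambda$-) lemma together with the (un)stable manifold theorem are the tools one expects to invoke here. The remaining steps — the destination argument and the invariance bookkeeping — are routine given the lemmas already established.
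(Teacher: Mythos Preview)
Your argument is sound and takes a somewhat different route from the paper's. The paper constructs the trajectory through $p$ in direction $v$ as the limit of trajectories $\phi_\varepsilon$ through the nearby noncritical points $p_\varepsilon = p + \varepsilon v$, applies Lemma~\ref{lem:origdestD} to each $\phi_\varepsilon$, and passes the destinations to the limit; the claim that $\SA(g,p,v)\subset\partial W^s(r)$ is then extracted from a terse invocation of the Stable Manifold Theorem. You instead prove flow-invariance of $\partial W^s(r)$ directly, reduce the problem to showing the $v$-trajectory enters $\partial W^s(r)$ for some small positive time (after which invariance carries it forever), and replay the proof of Lemma~\ref{lem:origdestD} verbatim for the destination. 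Your version is cleaner on the destination side, avoiding the analytic issues of interchanging limits of trajectories with limits of their destinations; the paper's version is shorter but leaves those interchanges implicit.

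The obstacle you isolate in your final paragraph --- that turning ``$v$ tangent to $\partial W^s(r)$'' into ``the $v$-trajectory lies in $\partial W^s(r)$'' requires knowing the local manifold structure of $\partial W^s(r)$ near the lower-index critical point $p$ --- is exactly the step the paper also does not unpack; it is hidden there behind the Stable Manifold Theorem citation. So your identification of this as the crux is accurate, and the tools you name (the $\lambda$-lemma and the local unstable manifold theorem) are the right ones to make it rigorous in either approach.
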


\begin{proof} Let $r \in D$ be a routing point of $g$ of index $n$. Let $p \in \partial W^s(r) \cap D$ be a routing point of $g$ of index strictly less than $n$. Let $v$ be a outgoing eigenvector of $(\Hess g)(p)$ tangent to $\partial W^s(r)$. 

For $\varepsilon > 0$, let $p_\varepsilon = p + \varepsilon v$ and $\phi_\varepsilon$ denote the trajectory of $\nabla g$ through $p_\varepsilon$ using $\widehat{\nabla g(p_\varepsilon)}$. For sufficiently small $\varepsilon$, $\nabla g(p_\varepsilon) \neq 0$ so $q_\varepsilon = \dest(\phi_\varepsilon)$ exists and is a routing point of $g$ according to Lemma~\ref{lem:origdestD}. Let $\phi = \lim_{\varepsilon \to 0} \phi_\varepsilon$, then $\phi$ is the trajectory of $\nabla g$ through $\lim_{\varepsilon \to 0} p_\varepsilon = p$ using $v$ and its image is $\SA(g, p, v)$. This limit exists because $g$ is continuous and the trajectories are unique (Lemma~\ref{lem:uniquephi}). Furthermore, $\dest(\phi)$ exists and is a routing point $q = \lim_{\varepsilon \to 0} q_\varepsilon$ of $g$.

As argued in the proof of Lemma~\ref{lem:stabmanidim}, we may use the conclusions of the Stable Manifold Theorem \cite[Theorem 4.2, Section 4.1, pp. 94]{Banyaga2004}. This theorem says that the positive eigenvector $v$ lies in the span of the negative eigenvectors of $(\Hess g)(p)$. As a consequence, $\SA(g, p, v)$ must lie in $\partial W^s(r)$. Furthermore, $q$ must also lie in $\partial W^s(r)$. Certainly $q \in D$ as $\SA(g, p, v)$ is a steepest ascent path that cannot leave the component $D$. Thus $q$ is reachable from $p$ using $g$ and $v$ and $q \in \partial W^s(r) \cap D$ as desired.
\end{proof}

\begin{lemma}\label{lem:boundaryconnect2max} 
Let $r \in D$ be a routing point of $g$ of index $n$. Let $q$ be a routing point of $g$ on $\partial W^s(r) \cap D$. Then $q$ is connected to $r$ by steepest ascent paths using outgoing eigenvectors of $g$. 
\end{lemma}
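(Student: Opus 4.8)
The plan is to argue by strong induction on the value $g(q)$, where $q$ ranges over the finite set $\mathcal{Q}$ of routing points of $g$ lying in $\partial W^s(r)\cap D$ (finite because $R$ is finite, Definition~\ref{def:routing}); since $g$ strictly increases along steepest ascent paths (Lemma~\ref{lem:trajincr}), producing a single ``first step'' from $q$ to a strictly higher-valued member of $\mathcal{Q}$ suffices, after which the inductive hypothesis and a concatenation of connectivity paths as in Definition~\ref{def:connsteepasc} finish the job. First I would record three facts. (i)~$W^s(r)$ is invariant under the gradient flow in both directions (all points of a trajectory share a destination), so $\overline{W^s(r)}$ is closed and flow-invariant, and hence so is $\partial W^s(r)=\overline{W^s(r)}\setminus W^s(r)$. (ii)~$W^s(r)\subseteq D$ and $r$ is the only routing point inside $W^s(r)$: a point with $g=0$ is critical (as $g\ge 0$) and cannot flow to $r$, and on $W^s(r)\setminus\{r\}$ one has $g<g(r)$ strictly, so no other critical point lies there; consequently $\overline{W^s(r)}\cap D=W^s(r)\sqcup(\partial W^s(r)\cap D)$. (iii)~by Lemma~\ref{lem:nominminmaxmax} the index of $q$ is at most $n-1$, so $(\Hess g)(q)$ has a positive eigenvalue.

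The core of the argument will be the claim that there exists an outgoing eigenvector $v$ of $(\Hess g)(q)$ with $\SA(g,q,v)\subseteq\overline{W^s(r)}$. Granting this, the trajectory emanating from $q$ along $v$ has a destination $q^{\dagger}$ which is a routing point in $D$ (the relevant existence-of-limit statement is the one already proved inside Lemma~\ref{lem:routstayinboundary}), and by~(ii) one has $q^{\dagger}\in W^s(r)\sqcup(\partial W^s(r)\cap D)$. If $q^{\dagger}\in W^s(r)$ then $q^{\dagger}=r$ and $q$ is connected to $r$ by the single trajectory $\SA(g,q,v)$ (taking $r_1=q$, $r_2=r$ in Definition~\ref{def:connsteepasc}, using $\nabla g(q)=0$ and $\nabla g(r)=0$). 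Otherwise the trajectory never meets the invariant open set $W^s(r)$, so $\SA(g,q,v)\subseteq\partial W^s(r)$ and $v$ is tangent to $\partial W^s(r)$; then $q^{\dagger}\in\partial W^s(r)\cap D$ (this is Lemma~\ref{lem:routstayinboundary}) with $g(q^{\dagger})>g(q)$ (Lemma~\ref{lem:trajincr}), so the inductive hypothesis applies to $q^{\dagger}$ and prepending the trajectory from $q$ to $q^{\dagger}$ gives the desired connectivity path. The base case (when $q$ has maximal value in $\mathcal{Q}$) falls out because only the first alternative can occur.

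To prove the claim I would invoke the Stable Manifold Theorem \cite[Theorem 4.2, Section 4.1, pp.~94]{Banyaga2004}, whose hypotheses are available for $g|_D$ exactly as in the proofs of Lemmas~\ref{lem:stabmanidim} and~\ref{lem:routstayinboundary}: near the hyperbolic critical point $q$ there is a chart in which the local stable and unstable manifolds are linear subspaces $E^s,E^u$, every forward orbit entering a small neighbourhood $N$ of $q$ off $W^s_{\mathrm{loc}}(q)$ leaves $N$ near $W^u_{\mathrm{loc}}(q)$, and its exit point depends continuously on the $E^u$-component of the starting point away from a lower-dimensional exceptional set inside $W^u_{\mathrm{loc}}(q)$. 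Since $q\in\partial W^s(r)$, the open set $W^s(r)\cap N$ is nonempty and accumulates at $q$; since the locus where the $E^u$-component has zero projection onto the top-eigenvalue eigenspace of $(\Hess g)(q)$ has empty interior, one can choose $p_j\to q$ in $W^s(r)$ whose $E^u$-components have nonzero such projection. Because the exit direction of a forward orbit is asymptotically governed by the largest positive eigenvalue, the directions of these $E^u$-components converge, along a subsequence, to a unit eigenvector $v$ of $(\Hess g)(q)$ for that eigenvalue, hence to an outgoing eigenvector; and the exit points of the orbits through the $p_j$, which lie in the flow-invariant set $W^s(r)$, converge to the exit point of $\SA(g,q,v)$, so this exit point, and therefore (by closedness and flow-invariance of $\overline{W^s(r)}$) the whole orbit $\SA(g,q,v)$ whose $\alpha$-limit is $q$, lies in $\overline{W^s(r)}$.

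I expect the main obstacle to be precisely the last step of the claim: replacing the limiting unstable \emph{direction} by a genuine \emph{eigenvector} of $(\Hess g)(q)$. This needs the asymptotic alignment of unstable components along the top-eigenvalue eigenspace under the local flow, together with the topological fact that $W^s(r)$ cannot accumulate at $q$ entirely within the lower-dimensional exceptional directions. The remainder --- the invariance statements, the decomposition of $\overline{W^s(r)}\cap D$, and the assembly of connectivity paths --- is routine bookkeeping with the lemmas already established and Definition~\ref{def:connsteepasc}.
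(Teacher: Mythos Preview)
Your overall scheme---produce an outgoing eigenvector at $q$ whose trajectory either reaches $r$ or lands at a routing point of strictly larger $g$-value on $\partial W^s(r)\cap D$, then iterate---is exactly the paper's. The paper organises the iteration informally (``repeat this process''), while your strong induction on $g(q)$ is a cleaner packaging of the same finiteness argument. The genuine difference is in how the first step is justified. The paper simply picks \emph{any} outgoing eigenvector $v$ of $(\Hess g)(q)$ and splits on whether $v$ is tangent to $\partial W^s(r)$: if not, one of $\SA(g,q,\pm v)$ is declared to lie in $W^s(r)$ by appeal to the disjoint-union decomposition of $D$ (Lemma~\ref{lem:disjunmorse}), so $r$ is reached directly; if yes, Lemma~\ref{lem:routstayinboundary} supplies the next boundary routing point. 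Your route instead \emph{constructs} a suitable $v$ as a limit of exit directions of orbits through points of $W^s(r)$ accumulating at $q$, forcing $v$ into the top positive eigenspace and $\SA(g,q,v)$ into $\overline{W^s(r)}$. The paper's tangency case-split is quicker and sidesteps the delicate alignment-with-top-eigenvector step you correctly flag as the obstacle; your construction, conversely, avoids having to reason about tangency or about local regularity of $\partial W^s(r)$, at the price of that limiting argument. Either way the inductive step and the conclusion are the same.
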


\begin{proof} 
Let $r \in D$ be a routing point of $g$ of index $n$. Let $q$ be a routing point of $g$ on $\partial W^s(r) \cap D$. According to Lemma~\ref{lem:nominminmaxmax}, $q$ must be a routing point of index strictly less than $n$. Hence, $(\Hess g)(q)$ has at least one outgoing eigenvector, call it $v$. Recall from Lemma~\ref{lem:stabmanidim} that $W^s(r)$ is a smooth $n$-dimensional manifold.

If $v$ is not tangent to $\partial W^s(r)$, then $\SA(g, q, v)$ or $\SA(g, q, -v)$ lies in the stable manifold $W^s(r)$ because $D$ is a disjoint union of stable manifolds (Lemma~\ref{lem:disjunmorse}). Hence $r$ is reachable from $q$ using $v$ (or $-v$). We see $q$ is connected to $r$ by steepest ascent paths using outgoing eigenvectors of $g$.

If $v$ is tangent to $\partial W^s(r)$, according to Lemma~\ref{lem:routstayinboundary} there exists another routing point $q_2$ that is reachable from $q = q_1$ using $v$. The routing point $q_2$ has index strictly less than $n$, so as before, there exists a routing point $q_3$ that is reachable from $q_2$ using $v$. We repeat this process. The function $g$ is bounded (Lemma~\ref{lem:gbounded}) and there are finitely many routing points, so eventually the process will terminate, and we will find a routing point $q_k$, $k \geq 1$, where $(\Hess g)(q_k)$ has an outgoing eigenvector $v_k$ that is not tangent to $\partial W^s(r)$. The point $r$ is reachable from $q_k$ using $g$ and $v_k$ as before. We have found a sequence of routing points $q_1 \dotsc, q_k$, $k \geq 2$ such that $q_i$ is reachable from $q_{i-1}$ using $g$ and an outgoing eigenvector of $(\Hess g)(q_{i-1})$. Thus the point $q$ is connected to $r$ by steepest ascent paths using outgoing eigenvectors of $g$ by the connectivity path $q_1, \dotsc, q_k, r$ and the corresponding trajectories connecting the routing points $q_1, \dotsc, q_k, r$.
\end{proof}

\begin{definition}
Let $p, q \in D$, $p \neq q$, be routing points of $g$ of index $n$. We say $W^s(p)$ is \textit{adjacent} to $W^s(q)$ if $D \cap \partial W^s(p) \cap \partial W^s(q)$ is non-empty.
\end{definition}

\begin{lemma}\label{lem:intersectmaxstable} 
Let $p, q \in D$, $p \neq q$, be routing points of $g$ of index $n$. If $W^s(p)$ is adjacent to $W^s(q)$, then $D \cap \partial W^s(p) \cap \partial W^s(q)$ must contain a routing point of $g$.  
\end{lemma}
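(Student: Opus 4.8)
The plan is to start from an arbitrary point $x$ in $D \cap \partial W^s(p) \cap \partial W^s(q)$, which is nonempty precisely because $W^s(p)$ is adjacent to $W^s(q)$, and to produce from it a routing point lying in the same triple intersection. First I would dispose of the trivial case: if $\nabla g(x) = 0$, then $x$ is a critical point of $g$, and since $x \in D \subseteq \{g \neq 0\}$ we have $g(x) \neq 0$, so $x$ is itself a routing point of $g$ in $D \cap \partial W^s(p) \cap \partial W^s(q)$ and there is nothing more to do.

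So assume $\nabla g(x) \neq 0$. By Lemma~\ref{lem:uniquephi} there is a unique trajectory $\phi$ of $\nabla g$ through $x$ using $\widehat{\nabla g(x)}$. The key observation is that both $p$ and $q$ are routing points of index $n$, so Lemma~\ref{lem:stayinboundary} applies with $r = p$ (since $x \in \partial W^s(p) \cap D$ and $\nabla g(x) \neq 0$) and yields that $\dest(\phi)$ exists and is a routing point lying in $\partial W^s(p) \cap D$. Applying Lemma~\ref{lem:stayinboundary} again, this time with $r = q$ (since likewise $x \in \partial W^s(q) \cap D$), yields that $\dest(\phi)$ is a routing point lying in $\partial W^s(q) \cap D$. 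Because the trajectory $\phi$ --- and hence its destination --- is the same in both applications, the point $\dest(\phi)$ is a single routing point of $g$ belonging simultaneously to $D$, to $\partial W^s(p)$, and to $\partial W^s(q)$; this is exactly a routing point in $D \cap \partial W^s(p) \cap \partial W^s(q)$, as required.

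The argument is short because almost all the work has been front-loaded into Lemma~\ref{lem:stayinboundary}; the only point that needs care is checking that its hypotheses are genuinely met for both $r = p$ and $r = q$ --- namely that $x$ lies in the boundary of each stable manifold, lies in $D$, and is not a critical point --- and then noticing that the uniqueness of the forward trajectory (Lemma~\ref{lem:uniquephi}) forces the two invocations to return one and the same routing point rather than two possibly different ones. No genuinely new estimate or compactness argument should be needed here; the main obstacle is purely bookkeeping, ensuring the earlier lemma is invoked legitimately and that the "boundary-preserving" conclusions for $W^s(p)$ and $W^s(q)$ are about the \emph{same} destination point.
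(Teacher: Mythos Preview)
Your proof is correct and follows the same approach as the paper: pick a point of $Z = D \cap \partial W^s(p) \cap \partial W^s(q)$ and invoke Lemma~\ref{lem:stayinboundary} to locate a routing point in $Z$. The paper frames this as a proof by contradiction and invokes Lemma~\ref{lem:stayinboundary} only once, tacitly using that the same destination lies on both boundaries; your direct argument, which applies the lemma separately for $r=p$ and $r=q$ and then appeals to uniqueness of the trajectory, is in fact the cleaner way to make that step explicit.
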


\begin{proof} 
Let $p, q \in D$, $p \neq q$, be routing points of $g$ of index $n$. Assume $Z = D \cap \partial W^s(p) \cap \partial W^s(q)$ is non-empty. Suppose $Z$ does not contain a routing point of $g$. As $Z$ is non-empty, there exists a point $x \in Z$ that is not a routing point of $g$. According to Lemma~\ref{lem:stayinboundary}, there exists a routing point $q \in Z$. However, this contradicts our assumption. Hence, $Z$ contains a routing point of $g$.
\end{proof}

We now prove Theorem~\ref{thm:connresult}.

\begin{proof}[Proof of Theorem~\ref{thm:connresult}] 
Let $R$ denote the set of routing points of $g$ in $D$ and $p, q \in D$, $p \neq q$ be arbitrary. We will show $p$ and $q$ are connected by steepest ascent paths using outgoing eigenvectors of $g$. We may assume without loss of generality that $p$ and $q$ are routing points of $g$, otherwise we can always ascend to one using Lemma~\ref{lem:origdestD} if $\nabla g(p) \neq 0$ or $\nabla g(q) \neq 0$. Let $m_1, \dotsc, m_\ell$ denote the routing points in $R$ having index $n$. We see $\ell \geq 1$ due to Lemma~\ref{lem:maxinD}. We see that $\lvert R \rvert > 1$ because $p$ and $q$ are both distinct routing points of $g$.

Suppose first that $\ell = 1$. According to Lemma~\ref{lem:stabmanidim}, $W^s(m_1)$ is $n$-dimensional and the stable manifolds for the points in $R \setminus \{m_1\}$ have dimension strictly less than $n$. As $D$ is a disjoint union of stable manifolds of the routing points in $R$ (Lemma~\ref{lem:disjunmorse}), it follows that the points in $R \setminus \{m_1\}$ lie on $\partial W^s(m_1)$. We see for all $r \in R \setminus \{m_1\}$, $r$ is connected to $m_1$ by steepest ascent paths using outgoing eigenvectors (Lemma~\ref{lem:boundaryconnect2max}), hence any two routing points in $D$ can be connected using steepest ascent paths using outgoing eigenvectors of $g$. 

Now suppose $\ell > 1$. According to Lemma~\ref{lem:stabmanidim}, for all $i$, $W^s(m_i)$ is $n$-dimensional and the stable manifolds for the points in $R \setminus \{m_1, \dotsc, m_\ell\}$ have dimension strictly less than $n$. If $p$ (or $q$) is a routing point with index strictly less than $n$, then it must lie on the boundary of some stable manifold $W^s(m_i)$. According to Lemma~\ref{lem:boundaryconnect2max}, we can connect $p$ (or $q$) to $m_i$ by steepest ascent paths using outgoing eigenvectors. Hence, we may assume without loss of generality that $p$ and $q$ are routing points having index $n$. We will connect $p$ and $q$ by looking at a sequence of adjacent stable manifolds of dimension $n$ as seen in Figure~\ref{fig:adjmanifolds}. 

\begin{figure}[ht]
\centering
\subfloat[]{\includegraphics{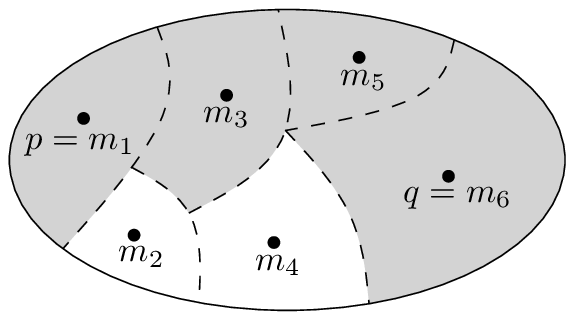}\label{fig:adjmanifolds}}
\qquad 
\subfloat[]{\includegraphics{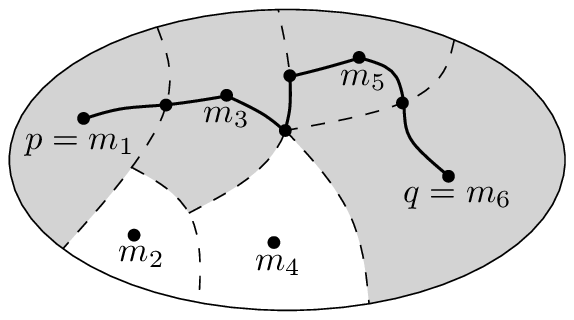}\label{fig:connadjmanifolds}}
\caption{The dotted lines are the boundaries of $W^s(m_i)$.}
\end{figure} 

It suffices to show that we can connect any two $m_i$, $m_j$ whose stable manifolds $W^s(m_i)$ and $W^s(m_j)$ are adjacent because $D$ is a disjoint union of stable manifolds of the routing points in $R$ (Lemma~\ref{lem:disjunmorse}). From Lemma~\ref{lem:intersectmaxstable}, we know two adjacent manifolds have a routing point in common in their boundary.  According to Lemma~\ref{lem:boundaryconnect2max}, we can connect this common routing point to both $m_i$ and $m_j$ by steepest ascent paths using outgoing eigenvectors. Hence we can connect $m_i$ and $m_j$ by steepest ascent paths using outgoing eigenvectors. We illustrate this in Figure~\ref{fig:connadjmanifolds}. This completes the proof of Theorem~\ref{thm:connresult}.
\end{proof}

We now prove Theorem~\ref{thm:correct}.

\begin{proof}[Proof of Theorem~\ref{thm:correct}] 
Let $f$, $p$, $q$ be the inputs to \textbf{Connectivity} satisfying the specification. Suppose \textbf{Connectivity} terminated with output $t$. Let 
\[
g = \frac{f^2}{U^\gamma} \text{ where } U = (x_1 - c_1)^2 + \dotsb + (x_n - c_n)^2 + 1,  \gamma = \deg(f)+1
\]
be the function formed in step 2. First, we claim that the set $R$ formed in step 3 is the set of routing points of $g$. We observe that
\begin{equation}\label{eq:dg}
\nabla g = \frac{f}{U^{\gamma+1}}(2 \nabla f U - \gamma f \nabla U )
\end{equation}
so
\begin{align*}
R &= \set{x \in \RR^n}{\nabla g(x) = 0, g(x) \neq 0} \\
&= \set{x \in \RR^n}{2 \nabla f(x) U(x) - \gamma f(x) \nabla U(x) = 0, f(x) \neq 0}.
\end{align*}
Let $\mathcal{F} = 2 \nabla f U - \gamma f \nabla U$. We see that the
$V_{\new{\mathbb{R}}}(\mathcal{F})$ contains exactly the routing
points of $g$ and the singular points of $f$ because $U$ is
non-zero. In step 3, we remove the finitely many singular points of
$f$ from $V_{\new{\mathbb{R}}}(\mathcal{F})$, leaving us with the
correct set of routing points. The set of routing points is finite
because $V_{\new{\mathbb{C}}}(\mathcal{F})$ is zero-dimensional.

We now claim that $g$ is a routing function. The function $g$ is $C^2$
because it is a rational function where the denominator is
positive. According to step 2, the finitely many routing points of $g$
are all nondegenerate because $\det (\Hess g)(r) \neq 0$ for all
$r \in R$. The choice of $\gamma = \deg(f) + 1$ guarantees the
property that $g$ vanishes at infinity (property two) because the
degree of the numerator is smaller than the degree of the
denominator. Certainly the function $g$ is nonnegative. To understand
why the first derivative of $g$ is bounded, we observe in
\eqref{eq:dg} that each component of $\nabla g$ is a rational function
where the degree of the numerator is smaller than the degree of the
denominator, which is nonnegative. A similar argument holds for each
component of $\Hess g$. Hence $g$ satisfies the properties in the
definition of a routing function.

Observe that $g = 0$ if and only if $f = 0$. Due to Theorem~\ref{thm:connresult}, we know the routing points of $g$ on a connected component of $\{f \neq 0\}$ are connected by steepest ascent paths using outgoing eigenvectors of $g$. It is important to observe that these steepest ascent paths do not cross $f = 0$ due to Lemma~\ref{lem:trajincr}. In steps 5 and 6, we use the certified \textbf{Destination} algorithm to determine which routing points are adjacent to one another via steepest ascent paths using outgoing eigenvectors. The matrix $A$ is the adjacency matrix for the graph whose vertices are the routing points and whose edges are the steepest ascent paths connecting them. Hence, the matrix $M$, the reflexive, symmetric, transitive closure of $A$, satisfies the condition that $M_{ij} = 1$ if and only if $r_i, r_j \in R$ lie in a same connected component of $\{f \neq 0\}$.

We claim that the point $p$ can be connected to a routing point $r_i$ lying in the same connected component of $\{f \neq 0\}$. If $\nabla g(p) = 0$ then $p$ is a routing point of $g$ because $f(p) > 0$ implies $g(p) > 0$; that is, there exists $i$ such that $r_i = p$. Otherwise, if $\nabla g(p) \neq 0$, let $\phi_p$ be the trajectory of $\nabla g$ through $p$ using $\widehat{\nabla g(p)}$. According to Lemma \ref{lem:origdestD}, there exists $i$ such that the destination of $\phi_p$ is a routing point $r_i$. The index $i$ in this case can be determined using the \textbf{Destination} algorithm (step 7). A similar argument holds for $q$; the point $q$ can be connected to a routing point $r_j$ lying in the same connected component of $\{f \neq 0\}$, with this index being determined in step 8. We use the connectivity matrix $M$ in step 9 to determine if $r_i$ and $r_j$ lie in a same connected component of $\{f \neq 0\}$ to conclude whether $p$ and $q$ lie in a same connected component.
\end{proof}


\section{Termination} \label{sec:termination}

In this section, we will prove that the termination of the algorithm \textbf{Connectivity} in the form of Theorem~\ref{thm:terminates}. For this, we must show that the perturbation step completes after a finite number of iterations. We will show in Theorem~\ref{thm:zariski} that there is only a small (measure zero) set of parameters for which the function $g$ formed in \textbf{Connectivity} is not a routing function. Hence we are guaranteed to find a routing function by finitely many perturbation of these parameters on the integer grid. 

\begin{theorem}\label{thm:zariski} For all nonzero $f \in \RR[x_1, \dotsc, x_n]$ there exists a semi-algebraic set $S \subset \RR^{n}$ such that $\dim \left(\RR^n \setminus S\right) < n$ and for all $(c_1, \dotsc, c_n) \in S$ the mapping $g \colon \RR^n \to \RR$ defined by
\begin{equation}\label{eq:generalg}
g = \frac{f^2}{\bigl((x_1 - c_1)^2 + \dotsb + (x_n - c_n)^2 + 1\bigr)^{\deg(f)+1}}
\end{equation}
is a routing function.
\end{theorem}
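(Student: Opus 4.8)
The plan is to verify the five defining properties of a routing function one at a time, and to identify for each the exceptional set of translation parameters $c = (c_1,\dotsc,c_n)$ that must be avoided. Three of the properties — nonnegativity, vanishing at infinity, and boundedness of the first and second derivatives — hold for \emph{every} choice of $c$, essentially by the argument already sketched in the proof of Theorem~\ref{thm:correct}: $g$ is a ratio of polynomials with positive denominator, and the choice $\gamma = \deg(f)+1$ makes the numerator of $g$ (and of each component of $\nabla g$ and each entry of $\Hess g$, after differentiating) of strictly smaller degree than the denominator. So the whole burden falls on the remaining two properties: finiteness of the set $R$ of routing points, and nondegeneracy of every point of $R$. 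These are the two that genuinely constrain $c$, and they will be handled by exhibiting a proper Zariski-closed "bad" locus in the parameter space $\RR^n$.

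First I would set up the incidence variety. Write $U_c = \sum_i (x_i - c_i)^2 + 1$ and, as in the algorithm, $\mathcal{F}_c = \{2(\partial_{x_i}f)U_c - \gamma f (\partial_{x_i}U_c)\}_{i=1}^n$, so that off $\{f=0\}$ the zero set $V_\RR(\mathcal{F}_c)$ is exactly $R$. Consider the algebraic set $W \subset \RR^n_x \times \RR^n_c$ cut out by the $\mathcal{F}_c$ together with the inequality $f \neq 0$ (work with its Zariski closure); projection to the $c$-space is the map whose generic fibre we want to control. The key structural fact is that the $c$-variable enters $\mathcal{F}_c$ linearly (through $\partial_{x_i}U_c = 2(x_i-c_i)$), so for \emph{fixed} $x$ with $f(x)\neq 0$ the equations $\mathcal{F}_c(x)=0$ are $n$ affine-linear equations in the $n$ unknowns $c$, generically determining $c$ uniquely in terms of $x$; equivalently, the "bad" $x$ are those where $\nabla f(x)$ is parallel to the prescribed combination, which is a lower-dimensional condition. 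From this one deduces that $W$ has dimension $\le n$ over the $c$-space, and hence that the set of $c$ for which $V_\RR(\mathcal{F}_c)\setminus V_\RR(f)$ is infinite (equivalently, positive-dimensional) is contained in a proper Zariski-closed subset $S_1^c$ of $\RR^n$. This is exactly where Sard's theorem enters: $c$ is a critical value of a suitable projection precisely when the fibre jumps in dimension, and the critical-value set has measure zero (indeed is contained in a proper algebraic set).

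Second, for the $c$ avoiding $S_1^c$, the set $R$ is finite; I would then cut out the further bad locus $S_2^c$ of parameters for which some $r \in R$ is degenerate, i.e. $\det(\Hess g)(r)=0$. Here I would use the Constant Rank Theorem / a parametrized transversality argument: consider the map $(x,c)\mapsto (\mathcal{F}_c(x), \det(\Hess_x g_c)(x))$ and show that $0$ is a regular value away from a proper closed set, so that for generic $c$ no common zero of $\mathcal{F}_c$ also kills the Hessian determinant — again the exceptional $c$ form a proper Zariski-closed set, hence $\dim < n$. The expected main obstacle is showing that this combined map is generically a submersion in the $c$-directions at the relevant points: one must rule out the degenerate scenario in which moving $c$ drags a routing point $r(c)$ along a locus where the Hessian stays singular. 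I would handle this by differentiating the identity $\nabla g(r(c),c) \equiv 0$ with respect to $c$ (using the implicit function theorem, valid once $r$ is nondegenerate for at least one $c$, which can be arranged, e.g., by a direct generic estimate or by noting $S \neq \emptyset$), obtaining that $\partial_c r$ is surjective, and then checking that $\det(\Hess g)$ cannot be constant along such a motion because $\det(\Hess g)$ is a non-constant rational function (it is non-constant since, e.g., near a generic local maximum of $f^2/U_c^\gamma$ it is positive while it changes sign across the variety it defines). Finally I would take $S = \RR^n \setminus (S_1^c \cup S_2^c)$; since a finite union of proper Zariski-closed subsets of $\RR^n$ is again contained in a proper Zariski-closed subset, $\dim(\RR^n \setminus S) < n$, and for every $c \in S$ all five properties hold, so $g$ is a routing function.
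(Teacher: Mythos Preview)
Your overall strategy—isolate the two properties that genuinely constrain $c$ and exhibit a proper closed bad locus—matches the paper's, and you are right that nonnegativity, vanishing at infinity, and boundedness of derivatives hold for every $c$. But the execution has a concrete error and a structural gap that the paper's proof avoids by a device you are missing.

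First, your claim that ``the $c$-variable enters $\mathcal{F}_c$ linearly (through $\partial_{x_i}U_c = 2(x_i-c_i)$)'' is false: the $i$-th equation is $2(\partial_{x_i}f)\,U_c - 2\gamma f\,(x_i-c_i)$, and $U_c = \sum_j (x_j-c_j)^2 + 1$ is \emph{quadratic} in $c$. So for fixed $x$ the system is not affine-linear in $c$, and your dimension count for the incidence variety does not go through as written. The paper resolves this by introducing an auxiliary scalar $t = U_c(x)/(\gamma f(x))$: the equations $\mathcal{F}_c(x)=0$ then become
\[
c_i \;=\; x_i - (\partial_{x_i}f)(x)\,t \quad (i=1,\dotsc,n), \qquad \gamma \;=\; \frac{U_c(x)}{t\,f(x)}.
\]
Now $c$ genuinely appears as the \emph{value} of a polynomial map $p\colon (x,t)\mapsto \bigl(x_i - t\,\partial_{x_i}f(x)\bigr)_i$ from an $(n{+}1)$-dimensional domain, so the semi-algebraic Sard theorem applies directly: for $c$ outside a set of dimension $<n$, the fibre $p^{-1}(c)$ is a smooth curve, and the remaining equation $q(x,t)=\gamma$ (which the paper checks has \emph{no} critical values, since $\partial_t q$ never vanishes on the domain) cuts it down to a finite set.

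Second, and this is where your proposal loses the most, the paper's reformulation handles nondegeneracy essentially for free. Writing $\nabla g = P\cdot Q$ with $Q=\mathcal{F}_c$ and $P$ a nonvanishing scalar, at a routing point one has $(\Hess g)(x) = P(x)\,(JQ)(x)$; the nonsingularity of $JQ(x)$ is then read off from the regularity of $(c,\gamma)$ as a value of $(p,q)$. Your separate transversality argument for the augmented map $(x,c)\mapsto \bigl(\mathcal{F}_c(x),\det\Hess_x g_c\bigr)$ is not only more laborious, it is circular as written: you invoke the implicit function theorem to track $r(c)$ smoothly, ``valid once $r$ is nondegenerate for at least one $c$''—but nondegeneracy is exactly what you are trying to prove, and your suggested bootstrap (``noting $S\neq\emptyset$'') presupposes the theorem. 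The missing idea is the $t$-substitution, which simultaneously makes $c$ the value of a map and identifies the Hessian at routing points with the Jacobian of the very map to which Sard has already been applied.
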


Before we prove Theorem~\ref{thm:zariski}, we recall defintions from semi-algebraic geometry \cite{Basu2003}. Let $A \subset \RR^m$ and $B \subset \RR^n$ be two semi-algebraic sets. A function $f \colon A \to B$ is \textit{semi-algebraic} if its graph is a semi-algebraic subset of $\RR^{m+n}$. For open $A$, the set of semi-algebraic functions from $A$ to $B$ for which all partial derivatives up to order $\ell$ exist and are continuous is denoted $\sS^{\ell}(A,B)$. The class $\sS^\infty(A,B)$ is the intersection of $\sS^{\ell}(A,B)$ for all finite $\ell$. A \textit{$\sS^\infty$-diffeomorphism} $\phi$ from a semi-algebraic open $U \subset \RR^n$ to a semi-algebraic open $V \subset \RR^n$ is a bijection from $U$ to $V$ such that $\phi \in \sS^\infty(U,V)$ and $\phi^{-1} \in \sS^\infty(V,U)$. 

Let $\ell \geq 0$. A semi-algebraic $A \subset \RR^n$ is a \textit{$\sS^\infty$-submanifold of $\RR^n$ of dimension $\ell$} if for every $x \in A$ there exists a semi-algebraic open $U$ of $\RR^n$ and an $\sS^\infty$-diffeomorphism $\phi$ from $U$ to a semi-algebraic open neighborhood $V$ of $x$ in $\RR^n$ such that $\phi(0)=x$ and
\[
\phi\Bigl(U \cap \bigl(\RR^\ell \times \{0\}\bigr)\Bigr) = A \cap V,
\]
where $\RR^\ell \times \{0\} = \set{(a_1,\dotsc, a_\ell, 0, \dotsc, 0) \in \RR^n}{(a_1, \dotsc, a_\ell) \in \RR^\ell}$.

\begin{lemma}\label{lem:sardcor2}  Let $A$ be an open $\sS^\infty$ manifold and $f \in \sS^\infty(A, \RR^m)$. Then there exists a semi-algebraic set $S \subseteq \RR^m$ and semi-algebraic open set $U \subseteq A$ such that for all $y^0 \in S$, $\dim \set{x \in U}{f(x) - y^0 =0} = \dim A - m$. Furthermore, $\dim \left(\RR^m \setminus S\right) < m$.
\end{lemma}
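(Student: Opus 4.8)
The plan is to obtain Lemma~\ref{lem:sardcor2} as a direct consequence of the semi-algebraic version of Sard's theorem together with the Constant Rank Theorem (equivalently, the regular value theorem) in the semi-algebraic $\sS^\infty$ category. Informally: take $U$ to be the set of points at which $f$ is a submersion and $S$ to be the associated set of regular values; Sard controls the size of the complement of $S$, while the regular value theorem pins down the dimension of the fibers over $S$.

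Concretely, I would first set
\[
U := \set{x \in A}{\rnk df_x = m},
\]
the set of regular points of $f$. Since rank is lower semi-continuous and $m$ is the maximal possible value, $U$ is open in $A$; and since $A$ is a semi-algebraic $\sS^\infty$ manifold and $f \in \sS^\infty(A,\RR^m)$, the rank condition is semi-algebraic, so $U$ is a semi-algebraic open subset of $A$. (If $\dim A < m$ then $U = \emptyset$; this degenerate case is consistent with the statement under the convention $\dim \emptyset = -\infty$, so I focus on $U \neq \emptyset$, i.e.\ $f$ is somewhere a submersion.) Next I would invoke the semi-algebraic Sard theorem for $\sS^\infty$ maps --- this is precisely where the $\sS^\infty$ regularity assumed throughout the paper is genuinely needed, since its finite-differentiability analogues can fail --- to conclude that the set of critical values $\Sigma := f(A \setminus U)$ is a semi-algebraic subset of $\RR^m$ with $\dim \Sigma < m$. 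I then put $S := \RR^m \setminus \Sigma$; it is semi-algebraic (the image $f(A \setminus U)$ is semi-algebraic by Tarski--Seidenberg, hence so is its complement), and $\dim(\RR^m \setminus S) = \dim \Sigma < m$, which is the ``furthermore'' clause.

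It remains to identify the fiber dimension. Fix $y^0 \in S$. Any $x$ with $f(x) = y^0$ must lie in $U$ (otherwise $y^0 \in \Sigma$), so $\set{x \in U}{f(x) - y^0 = 0} = f^{-1}(y^0)$, and $df_x$ is surjective at every such $x$. By the semi-algebraic implicit function / Constant Rank Theorem, in a suitable semi-algebraic chart around each such $x$ the set $f^{-1}(y^0)$ is the graph of an $\sS^\infty$ map, hence locally a semi-algebraic $\sS^\infty$-submanifold of $A$ of dimension $\dim A - m$. Therefore the semi-algebraic set $f^{-1}(y^0)$ has (pure) dimension $\dim A - m$ --- and is empty, consistently with the dimension convention, exactly when $y^0 \notin f(A)$. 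This yields $\dim \set{x \in U}{f(x) - y^0 = 0} = \dim A - m$ for every $y^0 \in S$, completing the argument.

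The step I expect to be the main obstacle is not geometric but a matter of semi-algebraic bookkeeping: citing clean statements of (i) the semi-algebraic Sard theorem for $\sS^\infty$ maps on semi-algebraic $\sS^\infty$ manifolds, giving $\dim \Sigma < m$, and (ii) the semi-algebraic regular value / Constant Rank Theorem, ensuring that a regular value really produces a submanifold of the expected codimension, together with a careful treatment of the degenerate cases ($f$ nowhere a submersion, or empty fibers over part of $S$). All of these are standard in semi-algebraic geometry and can be quoted from \cite{Basu2003}.
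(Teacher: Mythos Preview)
Your proposal is correct and follows essentially the same route as the paper: invoke the semi-algebraic Sard theorem to produce $S$ as the set of regular values with small complement, and then use the Constant Rank / regular value theorem to read off the fiber dimension $\dim A - m$. The one place you are actually cleaner than the paper is in your choice of $U$: the paper's proof picks $U$ as a neighborhood of a preimage $x^0$ of a fixed $y^0$, so its $U$ depends on $y^0$, whereas the lemma as stated requires a single $U$ working for all $y^0\in S$; your choice $U=\set{x\in A}{\rnk df_x=m}$ fixes this quantifier issue and also handles the empty-fiber case gracefully.
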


\begin{proof} Let $A$ be an open $\sS^\infty$ manifold and $f \in \sS^\infty(A, \RR^m)$. By the semi-algebraic version of Sard's Theorem \cite[Theorem 5.56, Section 9, pp. 192]{Basu2003}, the set $C$ of critical values of $f$ is a semi-algebraic set in $\RR^m$ and $\dim \left(\RR^m \setminus S\right) < m$. Let $S = \RR^m \setminus C$ be its complement (which is a semi-algebraic set). For any $y^0 \in S$ there exists $x^0  \in A$ where $y^0 = f(x^0)$. Let $g \colon A \to \RR^m$ be defined by $g(x) = f(x) - y^0$. Since $y^0 \notin C$, $\rnk \D g(x^0) = m$ because $f$ has full rank on a neighborhood of $x^0$. By the Constant Rank Theorem \cite[Theorem 5.57, Section 9, pp. 192]{Basu2003} there exists a semi-algebraic open neighborhood $U$ of $x^0$ in $A$ where $\dim \set{x \in U}{f(x) - y^0 = 0} = \dim \ker g = \dim A - \rnk g = \dim A -m$.
\end{proof}

We now have the machinery to present the proof of Theorem~\ref{thm:zariski}. 

\begin{proof}[Proof of Theorem~\ref{thm:zariski}] Assume $f \in \RR[x_1, \dotsc, x_n]$ is non-zero. For notational purposes let $x = (x_1, \dotsc, x_n)$. We will find a set $S$ so that $g$ is a routing function in the following manner. First, let $p = (p_1, \dotsc, p_n)$ be the mapping where $p_i \colon A \subset \RR^{n+1} \to \RR$ is defined by
\begin{equation}\label{eq:p}
p_i(x, t) = -\partial_i f(x) t + x_i
\end{equation}
and $A = \set{(x,t) \in \RR^n \times \RR}{t \neq 0 \text{ and } f(x) \neq 0}$. Observe that $A$ is an open $\sS^\infty$ manifold of dimension $n + 1$ and $p \in \sS^\infty(A, \RR^n)$. By Lemma~\ref{lem:sardcor2} there exists a semi-algebraic set $S_1 \subseteq \RR^n$ and semi-algebraic open set $U_1 \subseteq A \subseteq \RR^n \times \RR$ such that for all $y \in S_1$, $\dim V_1 = \dim A - n = (n+1) - n = 1$ where $V_1 = \set{(x,t)\in U_1}{p(x,t) - y = 0}$.

Let $y = (y_1, \dotsc, y_n) \in S_1$. Define $q \colon B \subset \RR^{n+1} \to \RR$ to be
\begin{equation}\label{eq:q}
q(x, t) = \frac{(x_1 - y_1)^2 + \dotsb + (x_n - y_n)^2 + 1}{tf(x)}
\end{equation}
where $B = A \cap V_1$. Observe $B$ is an open $\sS^\infty$ manifold of dimension $1$ and $q \in \sS^\infty(B, \RR)$. From Lemma~\ref{lem:sardcor2} we find a semi-algebraic set $S_{2,y} \subseteq \RR$ and semi-algebraic open set $U_2 \subseteq B \subseteq \RR^n \times \RR$ such that for all $\tilde{y} \in S_{2,y}$, $\dim V_{2,y} = \dim B - 1 = 1- 1 = 0$ where $V_{2,y} = \set{(x,t)\in U_2}{q(x,t) - \tilde{y} = 0}$. 

We claim $S_{2,y} = \RR$. From  Lemma~\ref{lem:sardcor2} we know 
\[
\RR \setminus S_{2,y} = \{\text{critical values of }q\}. 
\]
For notational purposes let
\[
W(x) = (x_1 - y_1)^2 + \dotsb + (x_n - y_n)^2 + 1.
\]
so $q(x, t) = \frac{W(x)}{tf(x)}$. Consider the system $\nabla q(x, t) = 0$:
\begin{align*}
\begin{bmatrix}\partial_{x_1} q(x, t) \\ \vdots \\ \partial_{x_n} q(x, t)  \\ -\frac{W(x)}{f(x)t^2}\end{bmatrix} = \begin{bmatrix}0 \\ \vdots \\ 0 \\ 0\end{bmatrix}.
\end{align*}
For all $(x,t) \in B$, we have $f(x) \neq 0$, $t \neq 0$, and $W(x) \neq 0$, which leads us to conclude
\[
-\frac{W(x)}{f(x)t^2} = 0
\]
is not true. Hence, the mapping $q$ has no critical points. Since the set of critical values of $q$ is empty, $S_{2,y} = \RR$.

Let $S = S_1$. Clearly $S \subset \RR^{n}$ and $S$ is semi-algebraic. The fact $\dim \left(\RR^n \setminus S\right) < n$ follows directly from Lemma~\ref{lem:sardcor2}.

Let $c = (c_1, \dotsc, c_n) \in S$, $\gamma \in S_{2,c} \setminus \{ 0\} = \RR \setminus \{0\}$, 
\[
U(x) = (x_1 - c_1)^2 + \dotsb + (x_n - c_n)^2 + 1
\]
and
\[
g(x) = \frac{f(x)^2}{U(x)^\gamma}.
\]
Let $R = \set{x \in \RR^n}{\nabla g(x) = 0 \text{ and } f(x) \neq 0}$ denote the set of routing points of $g$. We claim $R$ is finite. Observe
\begin{align}
\nabla g(x) &= \frac{2f(x) \nabla f(x)U(x)^\gamma - \gamma f(x)^2 U(x)^{\gamma-1} \nabla U(x)}{U(x)^{2\gamma}} \\
&= \frac{f(x)U(x)^{\gamma-1}\bigl[2\nabla f(x)U(x) - \gamma f(x) \nabla U(x)\bigr]}{U(x)^{2\gamma}} \\
&= \frac{f(x)}{U^{\gamma+1}}\bigl[2\nabla f(x)U(x) - \gamma f(x)\nabla U(x)\bigr]. \label{eq:gprime}
\end{align}
Let
\begin{align*}
P(x) &= \frac{f(x)}{U(x)^{\gamma+1}}\\
Q(x) &= 2\nabla f(x)U(x) - \gamma f(x)\nabla U(x).
\end{align*}
so $\nabla g(x) = P(x)Q(x)$. For all $x$, $P(x) \neq 0$, so $x \in R$ if and only if $Q(x) = 0$ and $f(x) \neq 0$. Let us rewrite $Q(x) = 0$ in the following way:
\begin{align*}
0 &= 2\nabla f(x)U(x) - \gamma f(x) \nabla U(x) \\
\begin{bmatrix}0\\\vdots\\0\end{bmatrix}&= 
\begin{bmatrix}
2\partial_{x_1} f(x)U(x) - 2\gamma f(x) (x_1-c_1)\\
\vdots\\
2\partial_{x_n} f(x)U(x) - 2\gamma f(x) (x_n-c_n)
\end{bmatrix} \\
\begin{bmatrix}c_1\\\vdots\\c_n\end{bmatrix}&= 
\begin{bmatrix}
-\partial_{x_1} f(x)\frac{U(x)}{\gamma f(x)} + x_1\\
\vdots\\
-\partial_{x_n} f(x)\frac{U(x)}{\gamma f(x)} + x_n
\end{bmatrix}. 
\end{align*}
Let $t = \frac{U(x)}{\gamma f(x)}$ so
\begin{align}
c_1 &= -\partial_{x_1} f(x)t + x_1 \notag \\
&\;\vdots \label{eq:newsystem} \\
c_n &=  -\partial_{x_n} f(x)t + x_n \notag\\
\gamma & = \frac{U(x)}{tf(x)} \notag
\end{align}
Since $\gamma \neq 0$, $x \in R$ if and only if $x$ satisfies \eqref{eq:newsystem} and $f(x) \neq 0$. Using our previous notation, rewrite \eqref{eq:newsystem} as 
\begin{align}
0 &= p_1(x,t) - c_1\notag \\
&\;\vdots \label{eq:newsystem2} \\
0 &=  p_n(x,t) - c_n\notag\\
0 & = q(x,t) - \gamma. \notag
\end{align}
Thus, when $(c_1, \dotsc, c_n) \in S$ and $\gamma \neq 0$, $x \in R$ if and only if $x$ satisfies \eqref{eq:newsystem2} and $f(x) \neq 0$. Suppose now that $x \in R$. It follows that $t = \frac{U(x)}{\gamma f(x)} \neq 0$ and $q(x_1, \dotsc, x_n, t) - \gamma = 0$, implying $(x_1, \dotsc, x_n, t) \in V_{2,c}$. As shown earlier, $\dim V_{2,c} = 0$. Combining this with the fact that $R \times (t \neq 0) \subset V_{2,c}$ implies $\dim R = 0$. The set $R$ is finite because $R$ is semi-algebraic and has dimension zero.
 
We now show the routing points of $g$ are nondegenerate. From \eqref{eq:gprime} we see
\[
(\Hess g)(x) = JP(x)Q(x) + P(x) JQ(x)
\]
where $JP$ is the jacobian of $P$. When we evaluate $\Hess g$ at a point $x \in R$,
\[
(\Hess g)(x) = JP(x)Q(x) + P(x) JQ(x) = P(x) JQ(x).
\]
Hence
\[
\det (\Hess g)(x) = \det \bigl(P(x) JQ(x)\bigr) = P(x)^n \det JQ(x).
\]
Clearly $P(x) \neq 0$. When $(c_1, \dotsc, c_n) \in S$ then $(c_1, \dotsc, c_n)$ is not a critical value of $p$. Also $\gamma$ is not a critical value of $q$. Thus $\det JQ(x) \neq 0$. It follows $\det (\Hess g)(x) \neq 0$ as desired.

What we have shown so far is that if $(c_1, \dotsc, c_n) \in S$ and $\gamma \neq 0$, then the function
\[
g = \frac{f^2}{\bigl((x_1 - c_1)^2 + \dotsb + (x_n - c_n)^2 + 1\bigr)^{\gamma}}
\]
has finitely many routing points that are all nondegenerate. The choice of $\gamma = \deg(f) + 1$ guarantees the function $g$ vanishes at infinity (property two) because the degree of the numerator is smaller than the degree of the denominator. Certainly the function $g$ is nonnegative. To understand why the first derivative of $g$ is bounded, we observe in \eqref{eq:dg} that each component of $\nabla g$ is a rational function where the degree of the numerator is smaller than the degree of the denominator, which is nonnegative. A similar argument holds for each component of $\Hess g$. Hence the function $g$ is a routing function, as desired.
\end{proof} 

\begin{theorem} Algorithm \textbf{Connectivity} terminates. \label{thm:terminates}
\end{theorem}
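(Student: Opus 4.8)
The plan is to isolate the only part of \textbf{Connectivity} that is not manifestly finite, the \textbf{loop} in step~2, and show that it exits after finitely many iterations; everything else then terminates. Indeed, once the loop exits, its exit condition forces $V_{\mathbb C}(\mathcal F)$ to be zero-dimensional, so the set $R$ of step~3 is finite; hence the \textbf{foreach} loop over $r\in R$ runs finitely often, each $V_r$ is a finite set of algebraic eigenvectors obtained by a terminating exact eigen-computation (legitimate since every $r\in R$ is algebraic and the entries of $\Hess g$ are rational functions with integer coefficients), the reflexive, symmetric and transitive closure of step~6 is a finite computation on a fixed $k\times k$ matrix, steps~7--9 do a bounded amount of work, and every call to \textbf{Destination} terminates by its specification. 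So \textbf{Connectivity} terminates iff the loop in step~2 does.

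For the loop, note first that the perturbation rule enumerates the whole grid $\ZZ_{\ge 0}^n$: in graded lexicographic order every lattice point is eventually visited, and for each $N$ only finitely many lattice points have coordinate sum $\le N$, so after finitely many iterations all of them have been tried. Hence it suffices to exhibit one $c^\star\in\ZZ_{\ge 0}^n$ satisfying the exit condition. Let $B\subseteq\RR^n$ be the set of $c$ for which the exit condition \emph{fails}; I claim $\dim B<n$. By Theorem~\ref{thm:zariski} there is a semi-algebraic $S$ with $\dim(\RR^n\setminus S)<n$ on which (for $\gamma=\deg f+1$) the function $g=f^2/U^{\gamma}$ is a routing function---in particular it has finitely many routing points, all nondegenerate; one checks (see the next paragraph) that $B$ is contained in $(\RR^n\setminus S)$ together with a further semi-algebraic set of dimension $<n$, whence $\dim B<n$. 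The Zariski closure of a semi-algebraic set of dimension $<n$ is a proper algebraic subset of $\RR^n$, so $B\subseteq\{P=0\}$ for some nonzero $P\in\RR[c_1,\dots,c_n]$. A nonzero polynomial cannot vanish on all of $\ZZ_{\ge 0}^n$ (induction on $n$: writing $P=\sum_k P_k(c_1,\dots,c_{n-1})\,c_n^{\,k}$, pick $(c_1,\dots,c_{n-1})\in\ZZ_{\ge 0}^{n-1}$ with some $P_k$ nonzero, then a nonnegative integer $c_n$ outside its finitely many roots). So some $c^\star\in\ZZ_{\ge 0}^n$ lies outside $B$, the exit condition holds there, and the loop reaches $c^\star$ (or exits earlier) after finitely many iterations.

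The step I expect to be the main obstacle is the claim $\dim B<n$, that is, relating the loop's computational exit test---phrased over $\mathbb C$ via $V_{\mathbb C}(\mathcal F)$ and complex nondegeneracy---to Theorem~\ref{thm:zariski}, which speaks only of the \emph{real} routing points. The nondegeneracy clause is the easy half: $\{c:\exists\,r,\ \mathcal F_c(r)=0,\ g(r)\ne 0,\ \det(\Hess g)(r)=0\}$ is the image of a constructible set, hence constructible, and its closure is proper because the identity $\det(\Hess g)(r)=P(r)^{\,n}\det JQ(r)$ from the proof of Theorem~\ref{thm:zariski} is purely algebraic and shows this determinant is nonzero at every routing point for $c\in S$. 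The genuinely delicate clause is ``$V_{\mathbb C}(\mathcal F)$ zero-dimensional'': the $c$-locus where it fails is Zariski-closed by upper semicontinuity of fibre dimension applied to the projection of the incidence variety $\{(x,c):\mathcal F_c(x)=0\}$ onto $c$-space, so one only needs a single $c$ making the \emph{complex} variety finite. I would obtain this by rerunning the map-theoretic argument in the proof of Theorem~\ref{thm:zariski} in the algebraic category---using generic smoothness in characteristic zero in place of the semi-algebraic Sard theorem---to see that for generic complex $c$ the complex zero set of $\mathcal F_c$ lying off $\{f=0\}$ is zero-dimensional; the part of $V_{\mathbb C}(\mathcal F)$ on $\{f=0\}$, namely the (finite, by hypothesis) singular locus of $\{f=0\}$ together with $\{f=0\}\cap\{U_c=0\}$, has to be handled separately (paralleling the removal of $V_\RR(f)$ in step~3), and this is the one place where the structure of $f$ and the genericity of $c$ must really be exploited.
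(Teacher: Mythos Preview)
Your overall strategy matches the paper's: reduce termination to the loop in step~2, invoke Theorem~\ref{thm:zariski} to see that the parameters for which $g$ fails to be a routing function form a set of dimension $<n$, conclude that some point of the nonnegative integer grid is good, and observe that everything after the loop is manifestly finite. The paper's proof is much terser than yours: it simply asserts that since $\dim(\RR^n\setminus S)<n$ one finds $c\in S$ after finitely many grid perturbations, and that for such $c$ ``it can be established computationally that $V_{\mathbb C}(\mathcal F)$ is zero-dimensional'' and the routing points are nondegenerate. In particular the paper does \emph{not} carry out the real-versus-complex bridge you work through; it takes for granted that the real condition $c\in S$ already forces the complex exit test to succeed, without justification. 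Your explicit argument (nonvanishing polynomial on $\ZZ_{\ge 0}^n$, upper semicontinuity of fibre dimension, and the algebraic identity $\det(\Hess g)(r)=P(r)^n\det JQ(r)$) is exactly the kind of detail the paper omits.

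The concern you flag at the end is real and is not addressed by the paper either. Since $\mathcal F_i = 2(\partial_{x_i}f)U - \gamma f(\partial_{x_i}U)$ vanishes identically on $\{f=0\}\cap\{U_c=0\}$, and over $\mathbb C$ this intersection has dimension $n-2$ for generic $c$, the variety $V_{\mathbb C}(\mathcal F)$ is positive-dimensional for \emph{every} $c$ once $n\ge 3$, so the exit test as literally stated would never be met. Your generic-smoothness argument would show that the part of $V_{\mathbb C}(\mathcal F)$ lying \emph{off} $\{f=0\}$ is generically finite and nondegenerate, which is what step~3 actually uses; this suggests the exit condition should be read (or amended) as a condition on $V_{\mathbb C}(\mathcal F)\setminus V_{\mathbb C}(f)$. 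So your proposal is at least as rigorous as the paper's own argument, and the gap you identify is a gap in the paper's formulation rather than in your approach.
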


\begin{proof} Let $f$, $p$, $q$ be the inputs to \textbf{Connectivity} satisfying the specification. To show Algorithm \textbf{Connectivity} terminates, first we must show that the loop in step 2 terminates in a finite number of iterations. Let $S$ be the semi-algebraic set from Theorem~\ref{thm:zariski} for the given $f$. According to Theorem~\ref{thm:zariski} the set of choices for $(c_1, \dotsc, c_n)$ for which 
\[
g = \frac{f^2}{\bigl((x_1 - c_1)^2 + \dotsb + (x_n - c_n)^2 + 1\bigr)^{\deg(f)+1}}
\]
is not a routing function is ``small'' since $\dim (\RR^n \setminus S) < n$. Hence, after a finite number of perturbations on the integer grid, we are guaranteed to find a parameter $(c_1, \dotsc, c_n) \in S$ which will guarantee $g$ is a routing function. 

Let $(c_1, \dotsc, c_n) \in S$ and 
\begin{align*}
\gamma &= \deg(f) + 1,\\ 
U &= (x_1 - c_1)^2 + \dotsb + (x_n - c_n)^2 + 1,\\
V_{\new{\mathbb{C}}}(\mathcal{F}) &= \bigl\{2 \cdot (\partial_{x_i} f) \cdot U - \gamma \cdot f \cdot (\partial_{x_i}U)\bigr\}_{i=1}^n.
\end{align*} 
\new{Using e.g. Gr\"obner bases, it can be established computationally
  that $V_{\new{\mathbb{C}}}(\mathcal{F})$ is zero-dimensional}. 

We see that the loop terminates because each of the finitely many
routing points of $g$, the set of points
$r \in V_{\new{\mathbb{C}}}(\mathcal{F})$ where $f(r) \neq 0$, are
nondegenerate.

The rest of the algorithm terminates because there are finitely many
routing points, the Hessian at each of these routing points has
finitely many outgoing eigenvectors, and the algorithm
\textbf{Destination} terminates.
\end{proof}


\section{Examples}\label{examples}

In this section, we present several non-trivial examples using input polynomials in two and three variables. Each example will illustrate the routing points and steepest ascent paths connecting them. The algorithms were implemented in the Maple computer algebra system. 

\begin{figure}[p]
\centering
\subfloat[]{\includegraphics[width=3in]{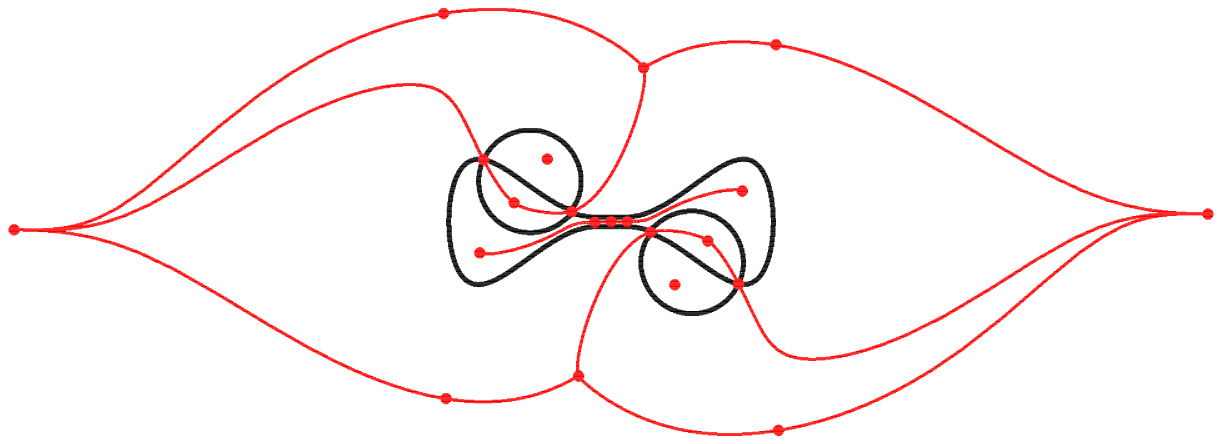}\label{fig:ex1cp}}
\subfloat[]{\includegraphics{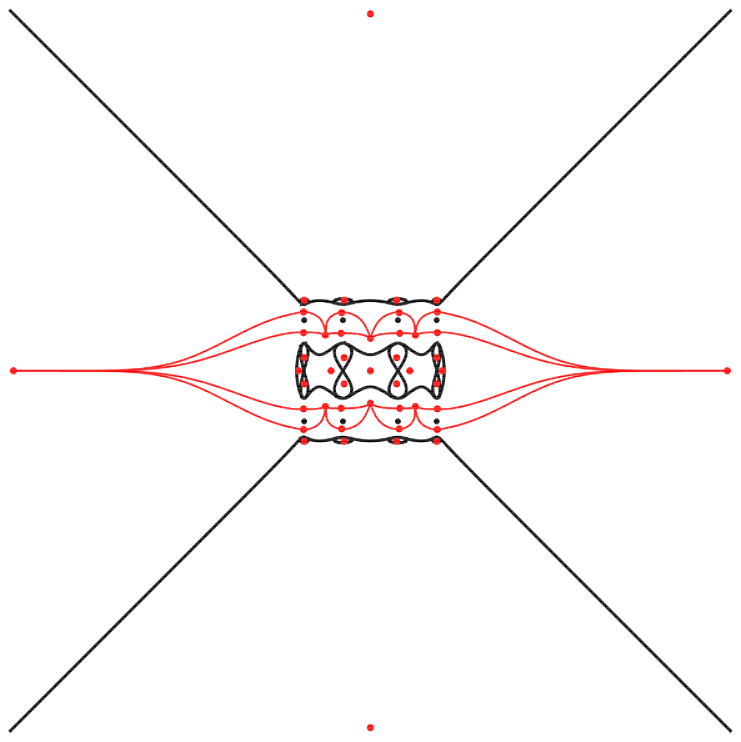}\label{fig:ex2cp}}

\bigskip

\bigskip

\bigskip

\subfloat[]{\includegraphics{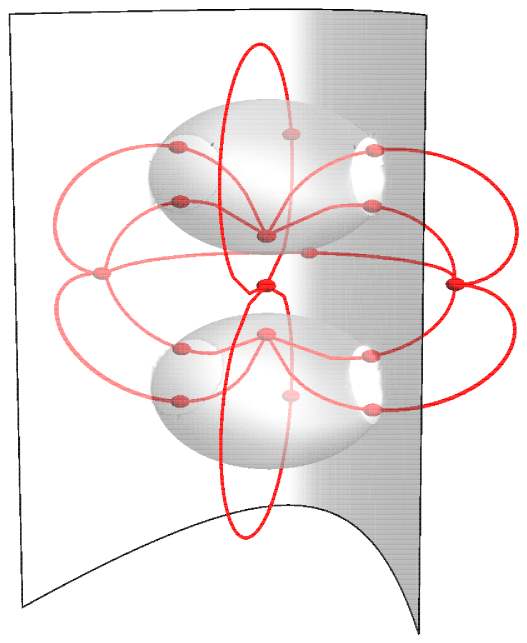}\label{fig:ex3cp}}
\subfloat[]{\includegraphics{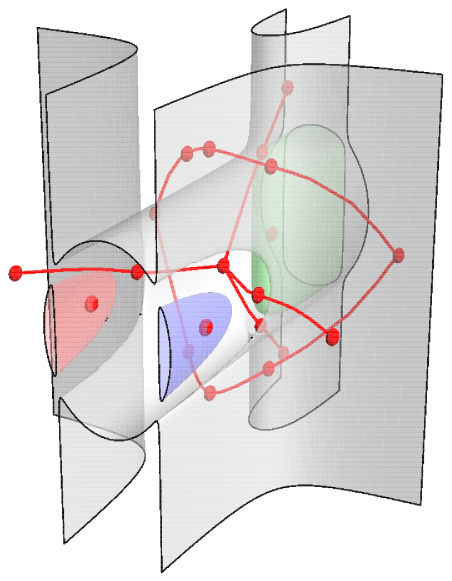}\label{fig:ex4cp}}
\end{figure}

\begin{example} \label{ex:cp1} Let
\begin{align*}
f &=1280000 x^{10}+2560000 x^8 y^2-2016000 x^8+1280000 x^7 y+1280000 x^6 y^4\\
& \quad -2336000 x^6 y^2 +793800 x^6-1280000 x^5 y-1280000 x^4 y^4+1056000 x^4 y^2\\
& \quad -59080 x^4+2560000 x^2 y^4-738560 x^2 y^2+736 x^2+1280000 x y^3-1280 x y\\
& \quad +1280000 y^6+222720 y^4+57576 y^2-45.
\end{align*}
In Figure~\ref{fig:ex1cp}, the curve $\{f \neq 0\}$ is shown in black while the routing point and steepest ascent paths are shown in red. The connectivity matrix formed had size $21 \times 21$.
\end{example}

In Example~\ref{ex:cp1} we see that the curve has many ``narrow'' gaps. The numeric methods for solving this problem would likely miss the narrow gaps, often producing wrong outputs. However, our algorithm presented in this article correctly catches all the narrow gaps.

\begin{example} Let
\begin{align*}
f &= 4096 x^{16}-16384 x^{14}+26624 x^{12}-22528 x^{10}-1024 x^8 y^4+1024 x^8 y^2 \\
& \quad + 10496 x^8 + 2048 x^6 y^4-2048 x^6 y^2-2560 x^6-1280 x^4 y^4+1280 x^4 y^2\\
& \quad +256 x^4 + 256 x^2 y^4 - 256 x^2 y^2-4096 y^{16}+16384 y^{14}-26624 y^{12}\\
& \quad +22528 y^{10}-10560 y^8+2688 y^6-352 y^4+32 y^2-1.
\end{align*}
In Figure~\ref{fig:ex2cp}, the curve $\{f \neq 0\}$ is shown in black while the routing point and steepest ascent paths are shown in red. The connectivity matrix formed had size $47 \times 47$.
\end{example}

\begin{example} Let
\begin{align*}
f &= -31 - 16 x^2 + 8 x^4 + 4 x^6 + 16 y + 16 x^2 y + 4 x^4 y - 32 y^2 + 
 8 x^4 y^2 + 16 y^3 + 8 x^2 y^3\\
& \quad - 8 y^4 + 4 x^2 y^4 + 4 y^5 + 
 96 z^2 - 64 x^2 z^2 + 8 x^4 z^2 - 48 y z^2 + 8 x^2 y z^2 - 
 16 y^2 z^2  \\
& \quad + 8 x^2 y^2 z^2 + 8 y^3 z^2 - 8 z^4 + 4 x^2 z^4 + 4 y z^4.
\end{align*}
In Figure~\ref{fig:ex3cp}, the semi-algebraic set $\{f = 0\}$ consists of one connected component which we show in light gray while the routing point and steepest ascent paths are shown in red. The connectivity matrix formed had size $16 \times 16$.
\end{example}

\begin{example} Let
\begin{align*}
f &= 20 x^4 y+20 x^2 y z^2-60 x^2 y+20 x^2-20 y z^2+40 y+20 z^2-41.
\end{align*}
In Figure~\eqref{fig:ex4cp}, the semi-algebraic set $\{f = 0\}$ consists of four connected components which we show in light gray, light red, light blue, and light green, respectively, while the routing point and steepest ascent paths are shown in red. The connectivity matrix formed had size $20 \times 20$.
\end{example}

\section{Conclusion and Future Works}\label{conclusion}

  In this paper we designed an algorithm for determining whether two points lie in a same connected component of a semi-algebraic set defined by a single polynomial inequation. We proved the method to be correct using modified results from Morse theory. Furthermore, we showed the method terminates using results from semi-algebraic geometry. We presented several non-trivial examples demonstrating the rigorousness of our method.

There are two problems that will be considered in the second forthcoming paper. First, we will give an algorithm for \textbf{Destination}. Care must be taken that the method to trace the steepest ascent paths using outgoing eigenvectors be rigorous. One such approach is to use interval based methods \cite{Makino2003, Moore2009, Nedialkov1999}. Researchers have used approaches like this in the past \cite{Vegter2012}, however their methods would need to be adapted carefully for our problem. 

The second problem to consider would be to find an upper bound on the length of the path connecting the two input points $p, q$ using the steepest ascent paths we discussed. Using techniques similar to those in \cite{DAcunto2004}, a bound can be derived in terms of the number of variables, the degree of the polynomial $f$, and the height of $f$. Such a result is interesting on its own, and will be used when determining the complexity of the method given in this article.

\begin{acknowledgment}
Hoon Hong's research was partially supported by the US National Science
Foundation (NSF 1319632). Mohab Safey El Din is supported by the joint 
ANR-FWF grant ANR-19-CE48-0015 ECARP, the ANR grants ANR-18-CE33-0011 
Sesame and ANR-19-CE40-0018 DeRerum Natura, the European Union’s Horizon 
2020 research and innovation program under the Marie Skłodowska-Curie 
Actions – Innovative Training Networks grant agreement N. 813211 POEMA.
\end{acknowledgment}

\bibliographystyle{elsarticle-harv}
\bibliography{connectivity,Hong,Rohal1,Rohal2,Rohal3,Rohal4}


\end{document}